\newtheorem{thm}{Theorem}[section]
\newtheorem{lem}[thm]{Lemma}
\newtheorem{prop}[thm]{Proposition}
\newtheorem{remark}{Remark}
\newcommand{\fp}{\mathbb{F}_p}
\newcommand{\muh}{{\mu_H}}
\newcommand{\nuhat}{{\widehat{\nu}}}
\newcommand{\muhat}{{\widehat{\mu}}}
\newcommand{\nuk}{{\nu_k}}
\newcommand{\C}{{\mathbb{C}}}
\newcommand{\Z}{{\mathbb{Z}}}
\newcommand{\N}{{\mathbb{N}}}
\newcommand{\norm}[1]{\|#1\|_2}   
\newcommand{\supnorm}[1]{\|#1\|_\infty}   
\begin{document}
\title[Exponential sums over small multiplicative subgroups]{Bounds on  exponential sums over small multiplicative
  subgroups}
\author{P\"ar Kurlberg}

\address{Department of Mathematics, Royal Institute of Technology,
SE-100 44 Stockholm, Sweden}
\email{kurlberg@math.kth.se}

\date{May 20, 2007}
\thanks{The author was partially supported by grants from the G\"oran
Gustafsson Foundation, the Royal Swedish Academy of Sciences, and the
Swedish Research Council.}

\begin{abstract}
We show that there is significant cancellation in certain
exponential sums over small multiplicative subgroups of finite
fields, giving an exposition of the arguments by Bourgain and Chang
\cite{boch06}.
\end{abstract}
\maketitle


\section{Introduction}

Let $\psi : \fp \to \C$ be any non-trivial additive character in
$\fp$ (that is, $\psi(x)= \exp \left(\frac{2 \pi i x \xi}{p}
\right)$ for all $x\in \fp$, for some $\xi \in \fp^\times$), and
let $H$ be a subset of $\fp$. We are interested in obtaining good
upper bounds for
$$
\left| \sum_{x \in H} \psi(x)  \right| ;
$$
that is, significantly smaller than $|H|$. A traditional analytic
number theory approach when  $H$ is the multiplicative subgroup of
$\fp$ of index $m$ is to ``complete the sum'':\ We have
$$
\frac 1m \ \sum_{ \substack{\chi \pmod p \\ \chi^m=\chi_0 }}
\chi(n) = \begin{cases} 1 & \text{if}\ n\in H, \\ 0 &
\text{otherwise;}
\end{cases}
$$
where the sum runs through the Dirichlet characters $\pmod p$ with
order dividing $m$. Therefore
$$
\sum_{x \in H} \psi(x) =  \sum_{n \in \fp} \psi(n)\ \frac 1m \
\sum_{ \substack{\chi: \\ \chi^m=\chi_0 }} \chi(n) = \frac 1m \
\sum_{ \substack{\chi: \\ \chi^m=\chi_0 }} \sum_{n \in \fp}
\psi(n) \chi(n) .
$$
The last sum, $\sum_{n \in \fp} \psi(n) \chi(n)$, is  a Gauss sum
when $\chi\ne \chi_0$ and is known to have absolute value
$\sqrt{p}$; and $\sum_{n \in \fp} \psi(n) \chi_0(n)=-1$. We deduce
that
$$
\left| \sum_{x \in H} \psi(x)  \right| < \sqrt{p}.
$$
This is non-trivial when $H$ has substantially more than $p^{1/2}$
elements and classical arguments can sometimes give non-trivial bounds
for interesting sets $H$ as small as $p^{1/4}$, but not much smaller.
For $H$ a multiplicative subgroup, the first bound of the form
$\sum_{x \in H} \psi(x) \ll_\delta p^{-\delta}|H|$ with $\delta>0$ and
for $|H|$ significantly smaller than $p^{1/2}$ was obtained when $|H|
\gg_\epsilon p^{3/7+\epsilon}$ (for all $\epsilon>0$) by Shparlinski
\cite{shp91-gauss-sums}, and later refined to $|H| \gg_\epsilon
p^{3/8+\epsilon}$ by Konyagin and Shparlinski (unpublished), for $|H|
\gg_\epsilon p^{1/3+\epsilon}$ by Heath-Brown and Konyagin 
\cite{heko00-gauss-sum}, and for $|H| \gg_\epsilon p^{1/4+\epsilon}$
by Konyagin \cite{kon02-gauss-sums}.  An essential ingredient in
these results are upper bounds on the number of $\fp$-points on
certain curves/varieties that significantly go beyond what the Weil
bounds give.

In several recent articles Bourgain along with Chang, Glibichuk, and,
Konyagin showed how to get non-trivial upper bounds for various
interesting $H$ that are much smaller, using completely different
methods --- the techniques of additive combinatorics.  The aim of this
note is to give an exposition of these ideas in the simplest
case\footnote{See Section~\ref{sec:incomplete-sums} for an easy
extension to the case of incomplete sums.} by
showing that there is significant cancellation in such exponential
sums over small multiplicative subgroups $H$ of the finite field
$\fp$.  
\begin{thm}
\label{t:main} Given $\alpha > 0$, there exists $\beta =
\beta(\alpha) > 0$ such that if $|H| > p^\alpha$,
and $H$ is a multiplicative subgroup of  $\fp$,
then
\begin{equation}
  \label{eq:basic-exp-sum}
\sum_{x \in H} \psi( x) \ll p^{-\beta}|H| .
\end{equation}
\end{thm}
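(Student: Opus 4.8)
The plan is to reduce the estimate to a multilinear exponential-sum bound that packages the sum--product phenomenon, and then to derive that bound from the sum--product theorem in $\fp$. Write $S=\sum_{x\in H}\psi(x)$. First, large subgroups are easy: the classical completion bound recalled above gives $|S|<\sqrt p$ for every multiplicative subgroup, so if $|H|\ge p^{1-\delta}$ then $|S|<p^{-(1/2-\delta)}|H|$, which already suffices when $\delta<1/2$; fix a small such $\delta>0$ (to be chosen compatibly with the sum--product theorem below) and assume henceforth that $p^{\alpha}<|H|<p^{1-\delta}$. The one elementary but crucial manoeuvre is that, because $H$ is a group, one may ``multiply out'': the map $H^{k}\to H$, $(h_{1},\dots,h_{k})\mapsto h_{1}\cdots h_{k}$, is onto with every fibre of size $|H|^{k-1}$, so
\[
\sum_{h_{1},\dots,h_{k}\in H}\psi(h_{1}h_{2}\cdots h_{k})=|H|^{k-1}\sum_{u\in H}\psi(u)=|H|^{k-1}S
\]
for every $k\ge 1$. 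Hence it suffices to bound this $k$-fold sum by $\ll p^{-\beta'}|H|^{k}$ for a suitable $k=k(\alpha)$ and $\beta'=\beta'(\alpha)>0$.

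The substantive ingredient is a multilinear exponential-sum estimate of Bourgain--Chang type: there is $\delta_{0}>0$ such that for each integer $k$ there is $\beta_{k}>0$ with
\[
\Bigl|\sum_{a_{1}\in A_{1}}\cdots\sum_{a_{k}\in A_{k}}\psi(a_{1}\cdots a_{k})\Bigr|\ll p^{-\beta_{k}}|A_{1}|\cdots|A_{k}|
\]
whenever $A_{1},\dots,A_{k}\subseteq\fp^{\times}$ satisfy $|A_{i}|\ge p^{\delta_{0}}$ for all $i$ and $|A_{1}|\cdots|A_{k}|\ge p^{1+\delta_{0}}$ (and since $\alpha$ is fixed throughout, one may if necessary replace $\delta_{0}$ by $\alpha/2$, the constants then depending on $\alpha$). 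Applying this with $A_{1}=\dots=A_{k}=H$: the hypotheses hold once $k$ is large enough in terms of $\alpha$, say $k\ge k(\alpha):=\lceil(1+\delta_{0})/\alpha\rceil+1$, and the identity above turns it into $|H|^{k-1}|S|\ll p^{-\beta_{k}}|H|^{k}$, that is $|S|\ll p^{-\beta_{k}}|H|$. Taking $\beta=\min\bigl(\beta_{k(\alpha)},\,1/2-\delta\bigr)>0$ then proves the theorem.

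It remains to prove the multilinear estimate, and this is the only non-elementary input --- and, I expect, where the real work lies. One argues by induction on $k$, the engine being the sum--product theorem in $\fp$ (for every fixed $c>0$ there is $c'>0$ with $\max(|A+A|,|A\cdot A|)\ge|A|^{1+c'}$ whenever $p^{c}\le|A|\le p^{1-c}$). Schematically: were the $k$-linear sum not $\ll p^{-\beta_{k}}|A_{1}|\cdots|A_{k}|$, a dyadic pigeonholing would produce many values $t$ in the product set $A_{2}\cdots A_{k}$ for which $\sum_{a_{1}\in A_{1}}\psi(a_{1}t)$ is abnormally large; feeding this, together with the growth of $A_{2}\cdots A_{k}$ forced by sum--product, into Cauchy--Schwarz and the Balog--Szemer\'edi--Gowers and Pl\"unnecke--Ruzsa inequalities, one extracts a large subset of a dilate of $A_{1}$ possessing both a small sum set and a small product set, contradicting sum--product in the relevant size range. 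It is essential here to have several factors at one's disposal --- so that at each stage of the induction one has a product set, formed from some of the factors, whose growth can be exploited --- which is why $k$ must be taken large in terms of $\alpha$. The genuinely delicate point --- the main obstacle --- is the quantitative bookkeeping: the gain $c'$ is eroded at each passage through Balog--Szemer\'edi--Gowers or Pl\"unnecke--Ruzsa, so the induction must be organized to terminate after $O_{\alpha}(1)$ steps while keeping $\beta_{k}=\beta_{k}(\alpha)$ strictly positive. Everything preceding this --- the classical bound and the multiplying-out identity --- is routine.
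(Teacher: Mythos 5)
Your reduction of the theorem to a multilinear estimate via the multiplying-out identity $\sum_{h_1,\dots,h_k\in H}\psi(h_1\cdots h_k)=|H|^{k-1}\sum_{u\in H}\psi(u)$ is valid and clean --- this is indeed a known route (it is essentially how Bourgain, Glibichuk and Konyagin organize the argument in their paper). But the proposal, as written, does not prove the theorem: the multilinear estimate you invoke is at least as strong as Theorem~\ref{t:main} itself, and the passage sketching its proof is genuinely incomplete. A dyadic pigeonhole on $\sum_{a_1}\psi(a_1t)$ produces a popular level set of \emph{tuples} $(a_2,\dots,a_k)$, not a subset of $\fp^\times$; converting that into a set with simultaneously small sum and product sets --- the precise object whose existence contradicts sum--product --- requires a concrete mechanism, and ``feeding into Cauchy--Schwarz, BGS, and Pl\"unnecke--Ruzsa'' names the tools without supplying it. The quantitative bookkeeping you flag as ``the main obstacle'' is in fact where the proof lives, so deferring it leaves the heart of the argument unproved.

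The paper takes a different decomposition which avoids the multilinear reduction altogether. It works directly with the $H$-invariant measure $\muh$ and the convolutions $\nu_k=(\muh\ast\muh^-)^{\ast k}$, whose Fourier transforms $\nuhat_k=|\muhat_H|^{2k}$ are nonnegative and $H$-invariant. Proposition~\ref{p:finding-Lambda-sub-delta} pins down a scale $k,\delta$ at which $\nuhat_k$ is $L^2$-concentrated on the set $\Lambda_\delta$ of large Fourier coefficients, and Lemma~\ref{l:smear-out-lemma} (using the $H$-invariance crucially) converts the hypothesis of a single large Fourier coefficient into the ``statistical multiplicative stability'' inequality of Proposition~\ref{p:smearing-out}. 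The self-contained Proposition~\ref{p:proposition2.1} then turns this single inequality into a set $S$ with small sum set and small product set via two applications of Theorem~\ref{t:bgs} (one in multiplicative form, one in additive form), producing an explicit contradiction with Theorem~\ref{t:sum-product}. In effect, the paper replaces your implicit ``repeated application of BGS/Pl\"unnecke--Ruzsa along an induction on $k$'' by a single hypothesis on $\muhat$ and exactly two passes through BGS, with all exponents tracked. If you want to pursue your route, you would need to prove the multilinear estimate in full, and it is worth knowing that its proof in the literature is comparable in length and structure to the argument the paper presents for the single-subgroup case.
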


A proof of this result was first sketched by
Bourgain and Konyagin in \cite{boko03}, and  detailed proofs were
subsequently given
by Bourgain, Glibichuk, and Konyagin in \cite{boglko06}.
This note is based on the arguments by Bourgain and Chang in
\cite{boch06}, and is a somewhat streamlined version of notes from
a lecture series  given at KTH.

However, as alluded to above, the idea of using additive combinatorics
is very versatile.  For instance, in \cite{bou04-diffie,bou05-diffie}
Bourgain showed that under certain circumstances it is enough to
assume that $H$ has a small multiplicative doubling set, i.e., that
$|H \cdot H| < |H|^{1+\tau}$ 
for $\tau>0$ small.  In particular, one can take $H = \{ g^t : t_0
\leq t \leq t_1 \}$ as long as the multiplicative order of $g$ modulo
$p$ and $t_1-t_0$ are not too small, and thus it is also possible to
non-trivially bound incomplete exponential sums over small (as well as
large) multiplicative subgroups.
Further, by suitably generalizing the sum-product theorem to subsets
of $\fp \times \fp$ (some care is required since there are subsets of
$\fp \times \fp$, e.g., any line passing through $(0,0)$, that violate
a naive generalization of the sum-product theorem), Bourgain showed
that there is considerable cancellation in sums of the form
$\sum_{s_1=1}^t | \sum_{s_2=1}^t \psi( a g^{s_1} + b g^{s_1 s_2})|$
(consequently proving equidistribution for so-called Diffie-Hellman
triples in $\fp^3$) and in \cite{bou04a,bou05} he obtained bounds for
Mordell type exponential sums $\sum_{x=1}^p \psi(f(x))$, where $f(x) =
\sum_{i=1}^r a_i x^{k_i}$ is a sparse polynomial (under suitable
conditions on the $k_i$'s.)
Moreover, in \cite{Bourgain-Chang-gauss-sum,boch06}
Bourgain and Chang obtained bounds on sums over multiplicative
subgroups (and ``almost subgroups'') of general finite fields
$\mathbb{F}_{p^n}$, respectively $\Z/q\Z$ where $q$ is allowed to be
composite, but with a bounded number of prime divisors.

\subsection{A brief outline of the argument}
Define an $H$-invariant probability measure $\muh$ on $\fp$ by
$$
\muh(x) := \begin{cases} 1/|H| & \text{if}\ x\in H,\cr 0 &
\text{otherwise,}
\end{cases}
$$
and assume that \eqref{eq:basic-exp-sum} is violated, i.e., that
there exists $\xi \in \fp^\times$ for which
\begin{equation}\label{eq:large-fourier-coeff}
\muhat_H(\xi) = \sum_{x \in \fp} \muh(x)
 \exp \left(\frac{2 \pi i x \xi}{p} \right)
> p^{-\beta} .
\end{equation}

Let $\nu = \muh \ast \muh^-$, where $\muh^-(x) = \muh(-x)$, and
let $\nu_k$ be the $k$-fold convolution of $\nu$.  Using
\eqref{eq:large-fourier-coeff}, it is possible to show  (see
Proposition~\ref{p:smearing-out}) that for some tiny $\eta$ and
$k$ sufficiently large,
\begin{equation}
\label{eq:brief-statistical-mult-inv}
\sum_{x, \xi \in \fp}
|\nuhat_k(\xi)|^2 |\nuhat_k(x \xi)|^2 \nu_k(x)
>
p^{-10\eta} \sum_{\xi \in \fp} |\nuhat_k(\xi)|^2,
\end{equation}
and that the support of $\nuhat_k$ is essentially contained in
the set of ``large Fourier coefficients'' $\Lambda_\delta$
(cf. Proposition~\ref{p:finding-Lambda-sub-delta}.)
Now, 
$\nuhat_k$ being essentially supported on $\Lambda_\delta$ means that 
$\nuhat_k$ and $\nuhat_{2k}$ are ``similar'' (note that 
$\nuhat_{2k}(\xi) = \nuhat_k(\xi)^2$, and $\nuhat_k(\xi)\geq 0$ for all
$\xi$), 
hence  $\nu_k$ and $\nu_{2k}= \nu_k \ast \nu_k$ are also similar,
and this  might be seen as a form of statistical, or approximate,
additive invariance 
for the
measure $\nu_k$.
Further, by Parseval, (\ref{eq:brief-statistical-mult-inv}) says that 
$
\sum_{x,y \in \fp}
\nu_{2k}(y) \nu_{2k}(x^{-1} y) 
\nu_k(x)
>
p^{-10\eta} \sum_{x \in \fp} \nu_k(x)^2,
$
which we may interpret as $\sum_{ y \in \fp} \nu_{2k}(y) \nu_{2k}(x^{-1}
y)$ being correlated with $\nu_k$, and this in turn might be seen as 
statistical  multiplicative invariance.  
(Also see Remarks \ref{rem:statistical-additive-stability} and
\ref{rem:statistical-multiplicativ-stability}.)
%
With $S_1$ being the set of points assigned large relative mass (i.e.,
those $x$ for which 
$\nu_k(x)$ is close to $\supnorm{\nu_k}$)
 as a starting point, these 
invariance properties can then be used to find a subset of $S_1$ with
both small sum 
and product sets.
More precisely, using (\ref{eq:brief-statistical-mult-inv}), together with the
Balog-Gowers-Szemer\'edi theorem (cf. 
Theorem~\ref{t:bgs}) in multiplicative form, we can find a fairly
large subset $S_3 \subset S_1$ with a small product set.  Using the
Balog-Gowers-Szemer\'edi theorem again, but in additive form, we
then find a large subset $S_4 \subset S_3$ which has a small sum
set.  Now, since $S_4 \subset S_3$, $S_4$ also has a small product
set, hence it contradicts the sum-product theorem (cf.
Theorem~\ref{t:sum-product}.)

{\bf Acknowledgment:} It is my pleasure to thank John B. Friedlander
and Andrew Granville for their encouragement, as well as many helpful
comments and suggestions.  I am also grateful to University of Toronto
for its hospitality during my visit in April 2007, during which 
parts of this note were written up.

\section{Some additive combinatorics results}
\label{sec:addit-comb-backgr}

We will need two essential ingredients from additive combinatorics.
First we recall the sum-product theorem for
subsets of $\fp$, due to Bourgain, Katz and Tao \cite{bokata04} 
(for an expository note, see \cite{gre05-sum-product}.)
\begin{thm}
\label{t:sum-product} For any $\epsilon>0$ there exists
$\delta=\delta(\epsilon)$ such that the following holds: If $A
\subset \fp$ is a subset for which $p^\epsilon < |A| <
p^{1-\epsilon}$ then
$$
|A + A|+|A \cdot A| \gg |A|^{1+\delta} .
$$
\end{thm}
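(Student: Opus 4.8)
The plan is to follow the argument of Bourgain, Katz and Tao; the guiding principle is that a set which is simultaneously an approximate additive group and an approximate multiplicative group behaves like an approximate subring, and $\fp$, being a prime field, has no subring strictly between $\{0\}$ and itself, so it cannot contain such an object of intermediate size. Argue by contradiction: if the statement fails for some $\epsilon>0$, then for arbitrarily small $\delta>0$ there is $A\subset\fp$ with $p^\epsilon<|A|<p^{1-\epsilon}$ and $|A+A|+|A\cdot A|<|A|^{1+\delta}$; put $K:=|A|^\delta$, so both doubling constants are $\le K$. The goal is a contradiction whenever $\delta$ is small enough in terms of $\epsilon$, and tracking the losses will determine the admissible $\delta(\epsilon)$. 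First I would normalize: picking $a\neq a'$ in $A$ and replacing $A$ by $(A-a)/(a'-a)$ puts $0$ and $1$ in $A$ without changing $|A+A|$, and the Ruzsa calculus (Pl\"unnecke--Ruzsa applied in the multiplicative group, e.g.\ $|A^{-1}A|\le K^2|A|$ and the like) keeps $|A\cdot A|\le K^{O(1)}|A|$; so we may assume $0,1\in A$ and $|A+A|,|A\cdot A|\le K_1|A|$ with $K_1=K^{O(1)}$.

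By the Pl\"unnecke--Ruzsa inequalities every iterated sumset obeys $|mA-nA|\le K_1^{m+n}|A|$ and the nonzero part obeys the analogue multiplicatively, and the distributive law $a(b\pm c)=ab\pm ac$ propagates small doubling through \emph{interleaved} sums and products: any word of bounded length in $+,-,\times$ applied to elements of $A$ lands in a set of size $\le K_1^{O(1)}|A|$ --- in particular $|A+A\cdot A|$ and $|A\cdot A+A\cdot A|$ are $\le K_1^{O(1)}|A|$. The heart of the proof is to upgrade this soft fact to a set that is \emph{genuinely} approximately closed under both operations: a set $R$ with $p^\epsilon<|A|\le|R|<p$, $0,1\in R$, and $|R+R|,|R\cdot R|\le K_2|R|$ for some $K_2=p^{o(1)}$. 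A naive iteration $R_{j+1}:=R_j+R_j+R_j\cdot R_j$ does not achieve this --- the size can grow by a fixed power of $K_1$ at every step, so after the $\sim\log p$ steps needed to reach true closure one has accumulated a hopeless loss $K_1^{\Theta(\log p)}$. Bourgain, Katz and Tao circumvent this with a quantitative dichotomy --- a ``magnification'' argument, which can equivalently be packaged as an incidence estimate for points and lines in $\fp^2$ that improves on the trivial $N^{3/2}$ bound by a power --- yielding: either $|A+A|+|A\cdot A|\ge|A|^{1+c(\epsilon)}$ already, contradicting the hypothesis once $\delta<c(\epsilon)$; or one can pass to a large subset on which the closure stabilizes after $O_\epsilon(1)$ steps with only polynomial losses, producing the approximate subring $R$.

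Once $R$ is in hand the contradiction is structural: if its additive closure were exact then $R$ would be an additive subgroup of $\fp$ containing $1$, hence all of $\fp$, forcing $p\le|R|\le K_1^{O_\epsilon(1)}|A|<p$ for $\delta$ small enough --- impossible; the magnification argument is precisely what supplies the approximate form of this impossibility, so one lands once more on the lower bound $|A+A|+|A\cdot A|\ge|A|^{1+c(\epsilon)}$, contradicting $|A+A|+|A\cdot A|<|A|^{1+\delta}$. Chasing the chain $\delta\to K\to K_1\to K_2\to c(\epsilon)$ fixes $\delta=\delta(\epsilon)$. The main obstacle is exactly the magnification step: converting ``all short words in $A$ stay small'' into a genuine approximate subring with only polynomial loss is the real innovation of \cite{bokata04}, and it is complicated further by the affine normalization needed to place $0$ and $1$ in $A$ degrading multiplicative structure, so the additive and multiplicative Ruzsa calculi must be interleaved with care. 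The remainder --- the Pl\"unnecke--Ruzsa bookkeeping and the observation that $\fp$ has no proper nonzero subring --- is routine.
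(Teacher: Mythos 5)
The paper states this theorem as a quoted result from \cite{bokata04} (with a pointer to the expository note \cite{gre05-sum-product}) and gives no proof, so there is nothing internal to compare your argument against. Evaluated on its own terms, your proposal correctly identifies the scaffolding of the Bourgain--Katz--Tao argument --- the contradiction set-up, normalization to $0,1\in A$, Pl\"unnecke--Ruzsa calculus in both operations, the distributive-law observation that bounded-length $\{+,\times\}$-words in $A$ stay inside a set of size $K^{O(1)}|A|$, and the structural endgame that $\fp$ has no proper nonzero subring --- but the one step that carries the entire weight of the theorem, which you call the ``magnification argument'' (upgrading control of short words to a genuine approximate subring with only $K^{O_\epsilon(1)}$ loss), is named and not argued. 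You yourself describe it as ``the real innovation'' of \cite{bokata04} and leave it as a black box; since that step \emph{is} the theorem, what you have written is an outline, not a proof.

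There is also a conceptual inversion worth flagging: you suggest the key step ``can equivalently be packaged as an incidence estimate for points and lines in $\fp^2$.'' In \cite{bokata04} the Szemer\'edi--Trotter-type incidence bound over $\fp$ is a \emph{consequence} of the sum-product theorem, derived as an application once the latter is in hand; the sum-product theorem itself is proved by combining Pl\"unnecke--Ruzsa with the Balog--Szemer\'edi--Gowers theorem and a structural lemma that forces a large piece of $A$ to sit inside a dilate of an approximate subring. Elekes's reduction from Szemer\'edi--Trotter to sum-product works over $\mathbb{R}$ because the genuine Szemer\'edi--Trotter theorem is available there, but over $\fp$ no nontrivial incidence theorem was known a priori --- which is precisely why Bourgain, Katz, and Tao had to invent a different method, and why presenting the incidence bound as an input rather than an output misstates the logical structure.
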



We will also need the following version of the Balog-Gowers-Szemer\'edi
theorem (this version of Theorem BGS' in   \cite{boch06} is an
immediate consequence of Theorem 5 in Balog's article herein
\cite{bal07}):

\begin{thm}\label{t:bgs}
Let $A$ and $B$ be finite subsets of an additive abelian group, $Z$,
and $G$ be a subset of $A\times B$, and let $S=\{a+b: (a,b)\in G\}$. If
$|A|, |B|, |S| \leq N$ and $|G|\geq \alpha N^2$ then there is an
$A'\subset A$ such that
\begin{equation}
\text{i) } |A'+A'|\leq \frac{2^{37}}{\alpha^8}N,\quad \text{ii) }
|A'|\geq \frac{\alpha^4}{2^{15}}N.
\end{equation}
\end{thm}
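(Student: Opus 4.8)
The plan is to recognise Theorem~\ref{t:bgs} as an explicit, asymmetric form of the Balog--Szemer\'edi--Gowers theorem, and to prove it along the classical lines: encode $G$ as a bipartite graph, run a dependent-random-choice (``popular paths'') argument to extract a large $A'$ in which \emph{every} pair of points is richly connected, convert this connectivity into a bound on a difference set via the hypothesis $|S|\le N$, and finish with a Pl\"unnecke--Ruzsa sumset inequality to pass from the difference set to $A'+A'$.

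First I would set up the bipartite graph $\Gamma$ with vertex classes $A$ and $B$ and edge set $G$. Since $|G|\ge\alpha N^2\ge\alpha|A||B|$, the edge density of $\Gamma$ is at least $\alpha$; moreover $|A|,|B|\ge\alpha N$, because, e.g., $\alpha N^2\le|G|\le|A|\cdot|B|\le|A|\cdot N$. The key structural feature is that the map $(a,b)\mapsto a+b$ carries the edge set into $S$, a set of size at most $N$, so that a walk $a_1\sim b\sim a_2$ of length two in $\Gamma$ produces the identity $a_1-a_2=(a_1+b)-(a_2+b)\in S-S$; and since $b$ is recoverable from the pair $(a_1+b,\,a_2+b)\in S\times S$, distinct common neighbours $b$ of $a_1$ and $a_2$ give distinct representations of $a_1-a_2$ as a difference of two elements of $S$.

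The combinatorial heart is the graph lemma: by a dependent-random-choice argument (choose a suitable random tuple of vertices in $B$, let $A'$ be its common neighbourhood, and delete the few ``badly connected'' points) one produces a set $A'\subseteq A$ with $|A'|\gg\alpha^{O(1)}N$ such that \emph{every} pair $(a_1,a_2)\in A'\times A'$ has at least $\gg\alpha^{O(1)}N$ common neighbours in $B$. The point that makes everything work --- and the genuinely non-trivial input --- is that this must hold for \emph{all} pairs, not merely for a $1-o(1)$ fraction of them: a single neighbourhood supplies only one common neighbour per pair, and the naive ``popular pairs'' count controls only most pairs, so neither suffices. Granting the all-pairs version, each difference $d=a_1-a_2$ with $a_1,a_2\in A'$ acquires at least $\gg\alpha^{O(1)}N$ representations in $S\times S$; as the representation sets of distinct differences are disjoint, $|A'-A'|\le|S|^2\big/(\alpha^{O(1)}N)\ll\alpha^{-O(1)}N$. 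Writing $|A'-A'|\le K|A'|$ with $K\ll\alpha^{-O(1)}$ and applying the Pl\"unnecke--Ruzsa inequalities then gives $|A'+A'|\ll\alpha^{-O(1)}N$, which is (i); the bound (ii) is just the lower bound on $|A'|$ from the graph lemma. The explicit constants $\alpha^4/2^{15}$ and $2^{37}/\alpha^8$ in the statement are recovered by tracking the exponents of $\alpha$ and the powers of $2$ through the random-choice step and the final sumset estimate.

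The main obstacle is precisely this all-pairs graph lemma together with the constant bookkeeping: ensuring rich connectivity for \emph{every} pair in $A'$ simultaneously (so that the disjointness-of-representations count really bounds the full difference set $A'-A'$ rather than a subset of it), while keeping the loss polynomial and small enough to land on $\alpha^8$ rather than a larger power, requires the sharp forms of the dependent-random-choice argument and of Pl\"unnecke--Ruzsa. This is, of course, exactly why the paper is content to quote Balog's Theorem~5 (\cite{bal07}), in which this optimisation has already been carried out, and to record only the short deduction of the stated form.
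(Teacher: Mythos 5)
The paper does not supply a proof of Theorem~\ref{t:bgs}: it is stated as an immediate consequence of Theorem~5 in Balog's companion article \cite{bal07}, so there is no internal argument to compare against. Your sketch is a correct high-level description of the standard Balog--Szemer\'edi--Gowers route: encode $G$ as a bipartite graph of density at least $\alpha$, extract by dependent random choice a large $A'\subset A$ in which \emph{every} pair has many common neighbours in $B$, note that the common neighbours of $(a_1,a_2)$ give pairwise-distinct representations of $a_1-a_2$ in $S\times S$ (so $|A'-A'|\le |S|^2/m \le N^2/m$ where $m$ is the minimal common-neighbour count), and finish by passing from $A'-A'$ to $A'+A'$ via Pl\"unnecke--Ruzsa. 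As a proof, however, this remains a plan rather than an argument: the all-pairs common-neighbourhood lemma --- which you correctly identify as the crux, since controlling only most pairs would not bound the full difference set --- is asserted but not proved, and the specific constants $2^{37}/\alpha^8$ and $\alpha^4/2^{15}$ are merely claimed to emerge from bookkeeping rather than actually derived. You acknowledge this explicitly and defer to Balog's result, which is fair given that the paper does exactly the same; but a self-contained justification would still require carrying out the dependent-random-choice step with explicit constant tracking, and that is essentially the entire content of the theorem being cited.
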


\section{The main technical result}
\label{sec:proof-of-prop-2.1}

In this section we prove the key technical result (cf.
\cite{boch06}, Proposition~2.1.):

\begin{prop}
\label{p:proposition2.1} Let $\mu$ be a probability measure on
$\fp$. If there exists a constant $\Delta\in (0, \frac 12]$ such
that
\begin{equation}
  \label{eq:2.5}
\sum_{\xi, y \in \fp} |\muhat(\xi)|^2 |\muhat(y\xi)|^2 \mu(y)
>
\Delta \sum_{\xi \in \fp}  |\muhat(\xi)|^2,
\end{equation}
and
\begin{equation}
  \label{eq:2.6}
\mu(0),\ \sum_{x \in \fp} \mu(x)^2 < \Delta/4
\end{equation}
then there exist a subset $S \subset \fp^\times$ such that
\begin{equation}
  \label{eq:size-of-S-and-l2-norm}
 \frac{\Delta^{254}}{2^{900}} \ p < |S|\ \sum_{\xi \in \fp}  |\muhat(\xi)|^2 <
  \frac 8 \Delta \ p,
\end{equation}
and
$$
|S+S| + |S \cdot S| <   \frac{2^{2729}}{\Delta^{768}} |S| .
$$
\end{prop}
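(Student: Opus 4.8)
The plan is to follow the three stage strategy sketched in the introduction: reformulate the hypothesis \eqref{eq:2.5} via Parseval as a pair of \emph{statistical stability} statements for a self-convolution of $\mu$, produce a structured set by two successive applications of the Balog--Gowers--Szemer\'edi theorem (Theorem~\ref{t:bgs}), once multiplicatively and once additively, and finally read off the claimed bounds.

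\smallskip
\noindent\textbf{Stage 1: reformulation.} Put $L:=\norm{\mu}^2=\sum_x\mu(x)^2$, so that $\sum_\xi|\muhat(\xi)|^2=pL$ by Parseval, and set $\sigma:=\mu\ast\mu^-$ where $\mu^-(x):=\mu(-x)$; then $\widehat{\sigma}=|\muhat|^2\geq 0$, $\|\sigma\|_1=1$, and $\supnorm{\sigma}=\sigma(0)=L$. Applying Parseval in the variable $\xi$ (the map $\xi\mapsto\muhat(y\xi)$ is the Fourier transform of a dilate of $\sigma$) turns \eqref{eq:2.5} into a trilinear inequality of the shape
\[
\sum_{ab=c}\sigma(a)\,\mu(b)\,\sigma(c)\;>\;\Delta L,
\]
in which, by the first part of \eqref{eq:2.6}, the term $b=0$ contributes at most $\mu(0)\cdot L<\Delta L/4$, so the genuine part (all of $a,b,c$ nonzero) is still $\gg\Delta L$; in particular $\norm{\sigma}^2=\sum_u\sigma(u)^2\gg\Delta L$, i.e.\ $\norm{\sigma}^2\gg\Delta\,\supnorm{\sigma}\|\sigma\|_1$, so $\sigma$ is essentially flat on a set of size between $1/L$ and $O(1/(\Delta L))$. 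This yields two stability statements. The first is \emph{statistical multiplicative stability}: the displayed trilinear inequality itself. The second is \emph{statistical additive stability}: since $\widehat{\sigma}\geq 0$ all power sums $\sum_\xi\widehat{\sigma}(\xi)^k$ are nonnegative, and a Cauchy--Schwarz interpolation between $k=1,2,3$ gives
\[
\sum_{a+b=c}\sigma(a)\sigma(b)\sigma(c)\;=\;\tfrac1p\sum_\xi\widehat{\sigma}(\xi)^3\;\geq\;\frac{\norm{\sigma}^4}{L}\;\gg\;\Delta\,\norm{\sigma}^2 .
\]
Finally, using the second part of \eqref{eq:2.6} together with $\norm{\sigma}^2\gg\Delta L$, a single threshold $\theta\asymp\Delta L$ already isolates a super-level set $U:=\{u\in\fp^\times:\sigma(u)\geq\theta\}$ that carries a $\gg\Delta$-proportion of the mass of $\sigma$, on which $\sigma\asymp_\Delta L$, and with $|U|\asymp_\Delta 1/L$ (no logarithmic losses, since one threshold suffices); this is the set from which $S$ will be carved.

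\smallskip
\noindent\textbf{Stages 2 and 3: two BGS applications, then the sum--product conclusion.} Truncating the trilinear inequality to $a,c\in U$ and to $b$ in a comparable level set of $\mu$ (the discarded parts are controlled by $\theta$ and by \eqref{eq:2.6}), and then replacing the weights by their ranges on these flat sets, produces a bipartite set $G$ of density $\gg\Delta^{O(1)}$ whose multiplicative image has size $\ll\Delta^{-O(1)}|U|$. Applying Theorem~\ref{t:bgs} to the group $(\fp^\times,\cdot)$ with $N\asymp|U|$ and $\alpha$ this density gives $S_3\subseteq U$ with $|S_3|\gg\Delta^{O(1)}|U|$ and $|S_3\cdot S_3|\ll\Delta^{-O(1)}|S_3|$. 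Because $S_3$ is still a fixed $\Delta^{O(1)}$-fraction of the (essentially flat) set $U$, the additive stability of Stage~1 survives the passage to $S_3$ up to a further bounded power of $\Delta$; feeding this into Theorem~\ref{t:bgs} once more, now in the additive group $(\fp,+)$, yields $S_4\subseteq S_3$ with $|S_4|\gg\Delta^{O(1)}|S_3|$ and $|S_4+S_4|\ll\Delta^{-O(1)}|S_4|$. Set $S:=S_4\subseteq\fp^\times$. Then $|S|\asymp_\Delta|U|\asymp_\Delta 1/L$, which upon inserting $\sum_\xi|\muhat(\xi)|^2=pL$ gives the two-sided bound \eqref{eq:size-of-S-and-l2-norm}; and since $S\subseteq S_3$ we have $|S\cdot S|\leq|S_3\cdot S_3|\ll\Delta^{-O(1)}|S_3|\ll\Delta^{-O(1)}|S|$, whence $|S+S|+|S\cdot S|\ll\Delta^{-O(1)}|S|$, the explicit exponents $254$, $768$ and the constant $2^{2729}$ being produced by chaining the two instances of the BGS bounds $|A'+A'|\le 2^{37}\alpha^{-8}N$, $|A'|\ge\alpha^4 2^{-15}N$. (The sum--product theorem, Theorem~\ref{t:sum-product}, is not used here; it enters only when this Proposition is applied.)

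\smallskip
\noindent\textbf{Main obstacle.} The delicate point is the passage in Stage~3 of the \emph{additive} stability down to the set $S_3$ produced by the \emph{multiplicative} BGS: a generic subset of $U$ of density only a small power of $\Delta$ need not inherit any additive energy, so this step must exploit that $\sigma$ is genuinely flat on $U$ (hence $S_3$ captures a proportional share of the $\sigma$-mass) via a popularity/refinement argument on the additive triple-correlation, rather than any crude $\ell^\infty$ bound on $\sigma\ast\sigma$. Making this cost only a bounded power of $\Delta$, extracting the honest combinatorial configurations of Theorem~\ref{t:bgs} from the measure-weighted inequalities without a loss depending on $L$ (this is precisely where the flatness hypotheses \eqref{eq:2.6} are essential), choosing the threshold $\theta$ so that simultaneously $|U|<8/(\Delta L)$ and all truncations in Stage~1 remain valid, and then propagating every constant cleanly through both invocations of Theorem~\ref{t:bgs} to land exactly on \eqref{eq:size-of-S-and-l2-norm} and the stated doubling bound, is the bulk of the work.
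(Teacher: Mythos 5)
Your outline matches the paper's three-stage strategy (Parseval reformulation, super-level set $S_1$ of $\phi = p\,\mu\ast\mu^-$, a multiplicative then an additive application of Theorem~\ref{t:bgs}), and you correctly observe that the sum--product theorem is not invoked inside the proof of this Proposition. But there is a genuine gap exactly at the point you flag yourself: how additive stability passes to the set $S_3$ produced by the multiplicative BGS step. You leave this unresolved, and the ``popularity/refinement argument on the additive triple-correlation'' you sketch as the intended repair is not how the paper handles it, nor is anything about $S_3$ capturing a proportional share of the $\sigma$-mass ever used. The paper's mechanism exploits only the containment $S_3\subset S_1$: every $x\in S_3$ satisfies $\phi(x) > \frac{\Delta}{8}\phi(0)$, so $\sum_{x\in S_3}\phi(x) > \frac{\Delta}{8}\phi(0)|S_3|$, and then writing $\phi(x)/p = \sum_y\mu(y)\mu(x+y)$ and applying Cauchy--Schwarz in $y$ gives
\begin{equation*}
\Bigl(\tfrac1p\sum_{x\in S_3}\phi(x)\Bigr)^2 \leq \Bigl(\sum_y\mu(y)^2\Bigr)\sum_y\Bigl(\sum_{x\in S_3}\mu(x+y)\Bigr)^2 = \frac{\phi(0)}{p^2}\sum_{x_1,x_2\in S_3}\phi(x_1-x_2),
\end{equation*}
whence $\sum_{x_1,x_2\in S_3}\phi(x_1-x_2) > 2^{-6}\Delta^2\phi(0)|S_3|^2$. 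Thus the $\phi$-weighted additive energy is automatically inherited by \emph{every} subset of $S_1$; the worry about generic subsets of $U$ having no additive energy is dissolved by working with the weighted quantity rather than the counting one. Thresholding a second super-level set $S_0 := \{x: \phi(x) > 2^{-7}\Delta^2\phi(0)\}$ (of size $\ll\Delta^{-3}|S_1|$) then converts this to the honest graph $G'=\{(x_1,-x_2): x_1-x_2\in S_0\}$ needed for the additive BGS application. Without an argument of this type the proof does not close.

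Two lesser inaccuracies are worth noting. In Stage~2, the second variable in the multiplicative graph is not a level set of $\mu$: the paper works with $T:=\{y: |S_2\cap y^{-1}S_2| > \Delta p/(8\phi(0))\}$ and bounds $\mu(T)$ from below and $|T|$ from below via a separate Cauchy--Schwarz, then takes $G=\{(x,y): x\in S_2, y\in T, xy\in S_2\}$. And your opening claim $\norm{\sigma}^2\gg\Delta L$ does not follow directly from the trilinear inequality; Cauchy--Schwarz there only gives $\gg\Delta^2 L$, and the super-level set gives $\gg\Delta^3 L$. These discrepancies affect only $\Delta^{O(1)}$ constants and not the architecture, but since the Proposition states explicit exponents, they would have to be tracked.
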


To prove Proposition \ref{p:proposition2.1}  we will construct a
sequence of subsets $\fp \supset S_1 \supset S_2 \supset S_3 \supset
S_4$ such that $|S_i|/|S_{i+1}|=\Delta^{O(1)}$, where $S_3$ has a
small product set and $S_4$ has a small sum set.

First let us  recall some useful properties of
the finite Fourier transform.  For a given probability measure $\mu$
on $\fp$ define its Fourier 
transform to be
$$
\muhat(\xi) := \sum_{x \in \fp} \mu(x) \psi(x \xi) ,
$$
so that $\overline{\muhat(\xi)}=\muhat(-\xi)$. With this
normalization, Parseval's formula reads as
$$
p \sum_{x \in \fp} |\mu(x)|^2 =  \sum_{\xi \in \fp} |\muhat(\xi)|^2.
$$
As $\mu$ is a probability measure, we see that
$$
\phi(x) := p (\mu \ast \mu^-)(x) = \sum_{\xi \in \fp}
|\muhat(\xi)|^2 \psi(x \xi)
$$
is  $\geq 0$ for all $x$. We will replace the middle term in
\eqref{eq:size-of-S-and-l2-norm} by $|S|\phi(0)$. Moreover,
$$
\sum_{x \in \fp} \phi(x) = p,
$$
since $\mu \ast \mu^-$ is also a probability measure. From the
Fourier expansion of $\phi$, we have
\begin{equation}
\label{eq:phi-zero-bound} \max_{x \in \fp}\ \phi(x) =\phi(0) = p
\cdot (\mu \ast \mu^-)(0) = p \sum_x \mu(x)^2 \leq  \Delta p /4
\end{equation}
by \eqref{eq:2.6}.

\subsection{Multiplicative stability} We obtain the
following form of ``statistical multiplicative stability''.

\begin{lem} If \eqref{eq:2.5} and \eqref{eq:2.6} hold then
\begin{equation}
\label{eq:2.9} \sum_{x \in \fp} \sum_{y \in \fp^\times} \phi(x)
\phi(xy) \mu(y)
> \frac 34 \ \Delta p\phi(0)
\end{equation}
\end{lem}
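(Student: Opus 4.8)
The plan is to expand both copies of $\phi$ in \eqref{eq:2.9} in their Fourier series, carry out the resulting sum over $x\in\fp$ by orthogonality, and thereby reduce the left-hand side of \eqref{eq:2.9} --- after correcting for the term $y=0$ --- to exactly $p$ times the left-hand side of the hypothesis \eqref{eq:2.5}.

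Concretely, I would first compute $\sum_{x\in\fp}\phi(x)\phi(xy)$ for a fixed $y$. Writing $\phi(x)=\sum_{\xi}|\muhat(\xi)|^2\psi(x\xi)$ and $\phi(xy)=\sum_{\eta}|\muhat(\eta)|^2\psi(xy\eta)$, the product is $\sum_{\xi,\eta}|\muhat(\xi)|^2|\muhat(\eta)|^2\psi\bigl(x(\xi+y\eta)\bigr)$, and summing over $x$ leaves only the terms with $\xi+y\eta\equiv 0 \pmod p$, each contributing a factor $p$. Using $|\muhat(-y\eta)|=|\muhat(y\eta)|$ (since $\overline{\muhat(\zeta)}=\muhat(-\zeta)$), this gives $\sum_{x\in\fp}\phi(x)\phi(xy)=p\sum_{\eta}|\muhat(\eta)|^2|\muhat(y\eta)|^2$. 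Multiplying by $\mu(y)$ and summing over all $y\in\fp$ therefore yields $p\sum_{\xi,y\in\fp}|\muhat(\xi)|^2|\muhat(y\xi)|^2\mu(y)$, i.e.\ $p$ times the quantity on the left of \eqref{eq:2.5}.

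Next I would restrict the $y$-sum to $\fp^\times$ by subtracting the $y=0$ contribution. Since $\muhat(0)=\sum_x\mu(x)=1$, that term equals $p\,\mu(0)\sum_{\xi}|\muhat(\xi)|^2$, so
$$
\sum_{x\in\fp}\sum_{y\in\fp^\times}\phi(x)\phi(xy)\mu(y)
= p\Bigl(\sum_{\xi,y\in\fp}|\muhat(\xi)|^2|\muhat(y\xi)|^2\mu(y)-\mu(0)\sum_{\xi}|\muhat(\xi)|^2\Bigr).
$$
Applying \eqref{eq:2.5} to the first term inside the parentheses and the bound $\mu(0)<\Delta/4$ from \eqref{eq:2.6} to the second, the parenthesized expression exceeds $\bigl(\Delta-\tfrac{\Delta}{4}\bigr)\sum_{\xi}|\muhat(\xi)|^2=\tfrac34\Delta\sum_{\xi}|\muhat(\xi)|^2$. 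Finally, Parseval's formula together with \eqref{eq:phi-zero-bound} identifies $\sum_{\xi}|\muhat(\xi)|^2=p\sum_x\mu(x)^2=\phi(0)$, and \eqref{eq:2.9} follows.

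I do not expect any genuine obstacle here: it is an orthogonality computation. The only points requiring a little care are the bookkeeping of the $y=0$ term (this is precisely where the hypothesis $\mu(0)<\Delta/4$ enters, costing the factor $3/4$), the complex-conjugation identity $|\muhat(-\zeta)|=|\muhat(\zeta)|$ used to pass from $\muhat(-y\eta)$ to $\muhat(y\eta)$, and the observation that the normalizing factor $p\,\phi(0)$ on the right of \eqref{eq:2.9} is, by Parseval, exactly $p$ times $\sum_\xi|\muhat(\xi)|^2$.
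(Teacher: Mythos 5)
Your proposal is correct and follows essentially the same approach as the paper: expand both copies of $\phi$ in Fourier series, use orthogonality to reduce $\sum_x\phi(x)\phi(xy)$ to $p\sum_\xi|\muhat(\xi)|^2|\muhat(y\xi)|^2$, then peel off the $y=0$ term and apply \eqref{eq:2.5} and $\mu(0)<\Delta/4$. The only cosmetic difference is that you use the exact value $\muhat(0)=1$ to handle the $y=0$ term, while the paper invokes the slightly weaker $|\muhat(0)|^2\leq 1$; both suffice.
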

\begin{proof}
For $y$ fixed, we have
$$
\sum_{x \in \fp} \phi(x) \phi(xy) = \sum_{\xi, \tau \in \fp}
|\muhat(\xi)|^2 |\muhat(\tau)|^2 \sum_{x \in \fp}
\psi(x\tau+xy\xi) = p \sum_{\xi \in \fp} |\muhat(\xi)|^2
|\muhat(-y \xi)|^2.
$$
Summing this over all $y \in \fp^\times$, we see that  the left hand
side of \eqref{eq:2.9}  equals
$$
p \sum_{y,\xi \in \fp} |\muhat(\xi)|^2 |\muhat(-y \xi)|^2 \mu(y) - p
\sum_{\xi \in \fp} |\muhat(\xi)|^2 |\muhat(0)|^2 \mu(0)
$$
$$
\geq p\ \Delta \sum_{\xi \in \fp} |\muhat(\xi)|^2 -p (\Delta/4)
|\muhat(0)|^2 \sum_{\xi \in \fp} |\muhat(\xi)|^2
$$
by \eqref{eq:2.5} and \eqref{eq:2.6}, as  $|\muhat(-y\xi)|^2 =
|\muhat(y\xi)|^2$, which yields the result since  $|\muhat(0)|^2
\leq 1$.
\end{proof}
\begin{remark}
Note that $\sum_{x \in \fp} \sum_{y \in \fp^\times} \phi(x) \phi(xy)
\mu(y) \leq \phi(0) \sum_{x,y \in \fp} \phi(x) \mu(y) \leq p
\phi(0)$.  In our applications, we shall take $\Delta =p^{-\epsilon}$,
and for this choice of $\Delta$, the lower bound~\eqref{eq:2.9} is
fairly good. 
\end{remark}

As a starting point for a  multiplicatively stable subset, we use
the points which are assigned large measure by  $\mu \ast \mu^-$.

\begin{lem} If \eqref{eq:2.5} and \eqref{eq:2.6} hold and
$$
S_1 := \{ x \in \fp : \phi(x) > \frac{1}{8} \Delta \phi(0) \}
$$
then
\begin{equation} \label{eq:2.11}
\sum_{\substack{x \in S_1, y \in \fp^\times \\ xy \in S_1}} \phi(x)
\phi(xy) \mu(y) > \frac 12 \ \Delta p \phi(0)
\end{equation}
\end{lem}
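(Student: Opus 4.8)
The plan is to derive \eqref{eq:2.11} from \eqref{eq:2.9} by showing that the ``defect'' --- the contribution to the triple sum $\sum_{x}\sum_{y\in\fp^\times}\phi(x)\phi(xy)\mu(y)$ coming from terms where $x\notin S_1$ or $xy\notin S_1$ --- is at most $\frac14\Delta p\phi(0)$, so that subtracting it from the lower bound $\frac34\Delta p\phi(0)$ in \eqref{eq:2.9} leaves at least $\frac12\Delta p\phi(0)$. The key point is that whenever $x\notin S_1$ we have $\phi(x)\le\frac18\Delta\phi(0)$ by definition of $S_1$, and symmetrically when $xy\notin S_1$ we have $\phi(xy)\le\frac18\Delta\phi(0)$; in either case one of the two factors $\phi(x)$, $\phi(xy)$ is small while the other is bounded by $\phi(0)=\max_x\phi(x)$ (cf. \eqref{eq:phi-zero-bound}).

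First I would write $\sum_{x\in\fp}\sum_{y\in\fp^\times}\phi(x)\phi(xy)\mu(y)$ as the sum of the ``good'' part (both $x$ and $xy$ in $S_1$, which is exactly the left side of \eqref{eq:2.11}) plus the ``bad'' part where $x\notin S_1$ or $xy\notin S_1$. Splitting the bad part by a union bound into the piece with $x\notin S_1$ and the piece with $xy\notin S_1$, I would bound the first by
$$
\sum_{x\notin S_1}\sum_{y\in\fp^\times}\phi(x)\phi(xy)\mu(y)
\le \frac18\Delta\phi(0)\sum_{x\in\fp}\sum_{y\in\fp}\phi(xy)\mu(y)
\le \frac18\Delta\phi(0)\cdot p,
$$
using $\phi(x)\le\frac18\Delta\phi(0)$ for $x\notin S_1$ and then the crude bound $\sum_{x,y}\phi(xy)\mu(y)\le\sum_{z}\phi(z)\cdot\sum_y\mu(y)=p$ (for each fixed $y\in\fp^\times$ the map $x\mapsto xy$ is a bijection of $\fp$, so $\sum_x\phi(xy)=p$; the $y=0$ term only helps). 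Symmetrically, for the piece with $xy\notin S_1$, I would bound $\phi(xy)\le\frac18\Delta\phi(0)$ and use that for fixed $y\in\fp^\times$ the sum over $x$ of $\phi(x)$ is $p$, giving the same bound $\frac18\Delta\phi(0)\cdot p$. Adding the two gives that the bad part is at most $\frac14\Delta p\phi(0)$.

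Combining, the good part is at least $\frac34\Delta p\phi(0)-\frac14\Delta p\phi(0)=\frac12\Delta p\phi(0)$, which is precisely \eqref{eq:2.11}. The only mild subtlety --- and the step I would be most careful about --- is handling the constraint $y\in\fp^\times$ versus $y\in\fp$ cleanly when reindexing: for $y=0$ one has $xy=0$, and $\phi(0)=\max\phi$, so if $0\notin S_1$ that term would need the other bound, but in fact $\mu(0)<\Delta/4$ by \eqref{eq:2.6} controls it; in any case the cleanest route is to note $0\in S_1$ trivially (since $\phi(0)>\frac18\Delta\phi(0)$ as $\Delta\le\frac12$), and to only ever reindex with $y\in\fp^\times$ where $x\mapsto xy$ is bijective, keeping all crude $\sum\phi=p$ bounds valid. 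No result beyond \eqref{eq:2.9}, \eqref{eq:2.6}, \eqref{eq:phi-zero-bound}, and $\sum_x\phi(x)=p$ is needed.
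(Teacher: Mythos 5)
Your decomposition and the bounds on the two ``bad'' pieces are essentially identical to the paper's proof: write the good sum as total minus bad, split bad by a union bound into the piece with $x\notin S_1$ and the piece with $xy\notin S_1$, use $\phi\le\frac18\Delta\phi(0)$ off $S_1$ on one factor, and sum out the other factor to get $p$ per $y$.

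One slip in your displayed chain, though: the ``crude bound'' $\sum_{x\in\fp}\sum_{y\in\fp}\phi(xy)\mu(y)\le\sum_z\phi(z)\cdot\sum_y\mu(y)=p$ is false in general, and your parenthetical ``the $y=0$ term only helps'' has it backwards. For $y=0$ the inner sum is $\sum_{x\in\fp}\phi(0)=p\phi(0)$, and $\phi(0)=p\sum_x\mu(x)^2\ge1$ by Cauchy--Schwarz, so including $y=0$ can only \emph{increase} the sum; explicitly
\[
\sum_{x\in\fp}\sum_{y\in\fp}\phi(xy)\mu(y)=p\bigl(1+\mu(0)(\phi(0)-1)\bigr)\ \ge\ p,
\]
with strict inequality unless $\mu(0)=0$ or $\mu$ is uniform. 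The fix is precisely the one you gesture at in your last paragraph and the one the paper uses: never enlarge the $y$-range, keep $y\in\fp^\times$, so that for each fixed $y\ne0$ the map $x\mapsto xy$ is a bijection of $\fp$ and
\[
\frac{\Delta\phi(0)}{8}\sum_{x\in\fp}\sum_{y\in\fp^\times}\phi(xy)\mu(y)
=\frac{\Delta\phi(0)}{8}\,p\sum_{y\in\fp^\times}\mu(y)
\le\frac{\Delta p\phi(0)}{8}.
\]
The symmetric piece (with $xy\notin S_1$) you handle correctly already. With this correction your argument is exactly the paper's; also, the worry about whether $0\in S_1$ is moot here, since $y$ is never $0$ and the decomposition does not require knowing where $0$ lands.
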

\begin{proof}
We have
$$
\sum_{\substack{x \in S_1, y \in \fp^\times \\ xy \in S_1}} \geq
\sum_{\substack{x \in \fp, y \in \fp^\times }} - \sum_{\substack{x
\in \fp \backslash S_1, y \in \fp^\times}} - \sum_{\substack{x \in
\fp, y \in \fp^\times \\ xy \not \in S_1}}.
$$
By \eqref{eq:2.9}, the first term on the right hand side is $>
(3/4)\Delta p \phi(0)$.  The second term
$$
\sum_{\substack{x \in \fp \backslash S_1, y \in \fp^\times}} \phi(x)
\phi(xy) \mu(y)
$$
is, since $\phi(x) \leq \Delta\phi(0)/8$ for $x \not \in S_1$,
bounded by
$$
\frac{\Delta\phi(0)}{8} \sum_{\substack{x \in \fp \backslash S_1, y
\in \fp^\times}} \phi(xy) \mu(y) \leq \frac{ \Delta\phi(0)}{8}
\sum_{y \in \fp^\times} \mu(y) \sum_{x \in \fp} \phi(xy) \leq \frac{
\Delta p\phi(0)} {8}
$$
since $\sum_{x \in \fp} \phi(xy) = p$ for $y \neq 0$ and $\mu$ is a
probability measure.  Similarly, the third term is bounded by $\Delta
p \phi(0)/8$, hence the left hand side of \eqref{eq:2.11} is $>
\Delta p\phi(0)(3/4-1/8-1/8) \geq \Delta p\phi(0)/2$.
\end{proof}

We proceed to estimate the size of $S_1$.

\begin{lem} If \eqref{eq:2.5} and \eqref{eq:2.6} hold then
\label{l:size-of-s1}
\begin{equation}
\label{eq:2.12} \frac{\Delta p}{2\phi(0)} < |S_1| < \frac{8
p}{\Delta \phi(0)} .
\end{equation}
Moreover, if we let
$$S_2 := S_1 \backslash \{ 0 \}\subset \fp^\times,$$ 
then $|S_2| \geq |S_1|/2$.
\end{lem}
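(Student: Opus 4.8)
The plan is to obtain both bounds in \eqref{eq:2.12} directly from what we already know about $\phi$ --- namely $\phi\geq 0$, $\sum_{x\in\fp}\phi(x)=p$, and $\phi(0)=\max_{x}\phi(x)\leq\Delta p/4$ from \eqref{eq:phi-zero-bound} --- together with the preceding lemma \eqref{eq:2.11}, and then to read off the statement about $S_2$ from the lower bound. For the upper bound I would simply restrict the identity $\sum_{x\in\fp}\phi(x)=p$ to $S_1$: since each $x\in S_1$ contributes $\phi(x)>\tfrac18\Delta\phi(0)$, this gives $p\geq\sum_{x\in S_1}\phi(x)>|S_1|\cdot\tfrac18\Delta\phi(0)$, i.e.\ $|S_1|<8p/(\Delta\phi(0))$.

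For the lower bound I would feed \eqref{eq:2.11} into a crude pointwise estimate. In the left-hand side of \eqref{eq:2.11} both $\phi(x)$ and $\phi(xy)$ are at most $\phi(0)$ by \eqref{eq:phi-zero-bound}, so
\[
\tfrac12\Delta p\phi(0)<\sum_{\substack{x\in S_1,\,y\in\fp^\times\\ xy\in S_1}}\phi(x)\phi(xy)\mu(y)\leq\phi(0)^2\sum_{x\in S_1}\ \sum_{\substack{y\in\fp^\times\\ xy\in S_1}}\mu(y)\leq\phi(0)^2\,|S_1|,
\]
where the final inequality holds because, for each fixed $x$, the inner sum runs over a subset of $\fp$ and $\mu$ is a probability measure, hence is at most $1$. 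Dividing through by $\phi(0)^2$ gives $|S_1|>\Delta p/(2\phi(0))$.

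Finally, since $S_2=S_1\setminus\{0\}$ we have $|S_2|\geq|S_1|-1$, so it is enough to know $|S_1|\geq 2$. Combining the lower bound just proved with $\phi(0)\leq\Delta p/4$ yields $|S_1|>\Delta p/(2\phi(0))\geq 2$, and since $|S_1|$ is an integer this forces $|S_1|\geq 3$; hence $|S_2|\geq|S_1|-1\geq|S_1|/2$, as claimed.

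There is essentially no obstacle here --- the argument is bookkeeping with nonnegative quantities. The only mild subtlety is in the lower bound: one should apply $\phi\leq\phi(0)$ to \emph{both} $\phi$-factors and then observe that, for each fixed $x$, the sum of $\mu(y)$ over $\{y\in\fp^\times:xy\in S_1\}$ is at most $1$ by total-mass normalization; this is precisely the step that turns \eqref{eq:2.11} into a clean cardinality estimate for $|S_1|$ without losing a spurious power of $p$.
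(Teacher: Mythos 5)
Your proof is correct and takes essentially the same approach as the paper's: the upper bound via $\sum_{x\in\fp}\phi(x)=p$ and the threshold defining $S_1$, the lower bound by combining \eqref{eq:2.11} with $\phi\leq\phi(0)$ on both factors and the total-mass bound on $\mu$, and the deduction $|S_2|\geq|S_1|/2$ from the lower bound exceeding $2$. The only cosmetic difference is that the paper reads the lower-bound chain in the opposite direction, starting from $|S_1|=\sum_y|S_1|\mu(y)\geq\sum_y|S_1\cap y^{-1}S_1|\mu(y)$ and descending, whereas you start from \eqref{eq:2.11} and ascend; the underlying double-counting is identical.
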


\begin{proof} For the lower bound, note that
\begin{equation}
\label{eq:2.13} |S_1| = \sum_{y \in \fp} |S_1| \mu(y) \geq \sum_{y
\in \fp^\times} |S_1 \cap y^{-1}S_1| \mu(y) = \sum_{\substack{x \in
S_1, y \in \fp^\times \\ xy \in S_1} } \mu(y)
\end{equation}
$$
\geq \frac{1}{\phi(0)^2} \sum_{\substack{x \in S_1, y \in \fp^\times
\\ xy \in S_1} } \phi(x)\phi(xy) \mu(y) > \frac{\Delta p}{2\phi(0)}
$$
by \eqref{eq:2.11}, which is $\geq 2$ by \eqref{eq:phi-zero-bound},
so that $|S_2|\geq |S_1|/2$. For the upper bound, note that
$$
|S_1| < \frac{8 }{\Delta\phi(0)} \sum_{x \in S_1} \phi(x) \leq
\frac{8}{\Delta\phi(0)} \sum_{x \in \fp} \phi(x) = \frac{8 p}{\Delta
\phi(0)}.
$$
\end{proof}

To show that there are many $y$ such that $|S_2 \cap y^{-1}S_2|$ is
fairly large, we begin by giving a lower bound on the expected size
of the intersection.

\begin{lem} If \eqref{eq:2.5} and \eqref{eq:2.6} hold then
\begin{equation}
  \label{eq:2.18}
\sum_{y \in \fp^\times} |S_2 \cap y^{-1}S_2| \mu(y) \geq
\frac{\Delta p}{4 \phi(0)}
\end{equation}
\end{lem}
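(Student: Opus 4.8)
The plan is to bootstrap off the computation already carried out in the proof of Lemma~\ref{l:size-of-s1}, combined with a cheap comparison between the dilated intersections of $S_1$ and of $S_2 = S_1\setminus\{0\}$.

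First I would observe that the intermediate steps of \eqref{eq:2.13} give, all on their own, the lower bound
$$
\sum_{y \in \fp^\times} |S_1 \cap y^{-1} S_1|\, \mu(y)
= \sum_{\substack{x \in S_1,\ y \in \fp^\times \\ xy \in S_1}} \mu(y)
\geq \frac{1}{\phi(0)^2} \sum_{\substack{x \in S_1,\ y \in \fp^\times \\ xy \in S_1}} \phi(x)\,\phi(xy)\,\mu(y)
> \frac{\Delta p}{2\phi(0)},
$$
where the middle inequality uses $\phi(x),\phi(xy) \leq \phi(0)$ (from \eqref{eq:phi-zero-bound}) and the last is \eqref{eq:2.11}.

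Next I would pass from $S_1$ to $S_2$. Since $y \in \fp^\times$ is invertible we have $y^{-1}S_2 = (y^{-1}S_1)\setminus\{0\}$, hence $S_2 \cap y^{-1}S_2 = (S_1 \cap y^{-1}S_1)\setminus\{0\}$ and therefore $|S_2 \cap y^{-1}S_2| \geq |S_1 \cap y^{-1}S_1| - 1$ for every $y \in \fp^\times$. Summing this against $\mu$ over $\fp^\times$ and using $\sum_{y \in \fp^\times}\mu(y) \leq 1$ yields
$$
\sum_{y \in \fp^\times} |S_2 \cap y^{-1}S_2|\, \mu(y)
\;\geq\; \sum_{y \in \fp^\times} |S_1 \cap y^{-1}S_1|\, \mu(y) \;-\; 1
\;>\; \frac{\Delta p}{2\phi(0)} - 1 .
$$
Finally, \eqref{eq:phi-zero-bound} gives $\phi(0) \leq \Delta p/4$, i.e.\ $\Delta p/(4\phi(0)) \geq 1$, so $\frac{\Delta p}{2\phi(0)} - 1 \geq \frac{\Delta p}{2\phi(0)} - \frac{\Delta p}{4\phi(0)} = \frac{\Delta p}{4\phi(0)}$, which is \eqref{eq:2.18}.

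There is no real obstacle here; the only thing to watch is that the single point removed in passing from $S_1$ to $S_2$ is negligible — and this is precisely what \eqref{eq:phi-zero-bound} secures, the constant degrading merely from $1/2$ to $1/4$.
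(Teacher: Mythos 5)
Your proof is correct and takes essentially the same approach as the paper: split off the point $0$ via $S_2 \cap y^{-1}S_2 = (S_1 \cap y^{-1}S_1)\setminus\{0\}$, invoke the intermediate bound $\sum_{y \in \fp^\times}|S_1 \cap y^{-1}S_1|\mu(y) > \Delta p/(2\phi(0))$ from the proof of Lemma~\ref{l:size-of-s1} (which the paper cites directly as ``the right hand side of \eqref{eq:2.13}''), and absorb the $-1$ using \eqref{eq:phi-zero-bound}.
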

\begin{proof} Since $S_2 \cap y^{-1}S_2 = (S_1 \cap y^{-1}S_1) \backslash \{ 0
\}$ for all $y \in \fp^\times$ we have
$$
\sum_{y \in \fp^\times} |S_2 \cap y^{-1}S_2| \mu(y) \geq \sum_{y \in
\fp^\times} |S_1 \cap y^{-1}S_1| \mu(y) - \sum_{y \in \fp^\times}
\mu(y)
$$
$$
> \frac{\Delta p}{2 \phi(0)} -
1\geq \frac{\Delta p}{4 \phi(0)}
$$
by the right hand side of \eqref{eq:2.13} and as $\sum_{y \in \fp}
\mu(y) = 1$, and then by \eqref{eq:phi-zero-bound}.

\end{proof}

In the next result we show that there are many $y$ for which $|S_2
\cap y^{-1}S_2|$ is large:

\begin{lem} If \eqref{eq:2.5} and \eqref{eq:2.6} hold and
\begin{equation}
\label{eq:2.19} T := \left\{ y \in \fp^\times : |S_2 \cap y^{-1}
S_2|
> \frac{\Delta p}{8 \phi(0)} \right\}
\end{equation}
then
\begin{equation}
\label{eq:2.20} |T| \geq \frac{\Delta^5}{2^{15}} |S_1|
\end{equation}
\end{lem}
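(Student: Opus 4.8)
The plan is a short dyadic-pigeonholing argument: feed the averaged lower bound \eqref{eq:2.18} through the threshold defining $T$ in \eqref{eq:2.19}, deduce a lower bound for the $\mu$-mass of $T$, and then convert that mass bound into the cardinality bound \eqref{eq:2.20} by Cauchy--Schwarz.

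First I would split the sum in \eqref{eq:2.18} according to whether $y\in T$. For $y\in\fp^\times\setminus T$ the definition \eqref{eq:2.19} gives $|S_2\cap y^{-1}S_2|\le \Delta p/(8\phi(0))$, so those terms contribute at most $\tfrac{\Delta p}{8\phi(0)}$ since $\mu$ is a probability measure; subtracting this from \eqref{eq:2.18} leaves
\[
\sum_{y\in T}|S_2\cap y^{-1}S_2|\,\mu(y)\ \ge\ \frac{\Delta p}{4\phi(0)}-\frac{\Delta p}{8\phi(0)}\ =\ \frac{\Delta p}{8\phi(0)}.
\]
Next I would estimate each intersection trivially, $|S_2\cap y^{-1}S_2|\le|S_2|\le|S_1|$, pull $|S_1|$ out of the sum, and invoke the upper bound $|S_1|<8p/(\Delta\phi(0))$ from \eqref{eq:2.12}:
\[
\sum_{y\in T}\mu(y)\ \ge\ \frac{\Delta p}{8\phi(0)\,|S_1|}\ >\ \frac{\Delta p}{8\phi(0)}\cdot\frac{\Delta\phi(0)}{8p}\ =\ \frac{\Delta^{2}}{64}.
\]

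It then remains to turn this $\ell^1$-mass estimate into a lower bound on $|T|$, and the right tool is Cauchy--Schwarz together with the Parseval identity $\sum_{y\in\fp}\mu(y)^2=\phi(0)/p$ (as used in \eqref{eq:phi-zero-bound}):
\[
\frac{\Delta^{2}}{64}\ <\ \sum_{y\in T}\mu(y)\ \le\ |T|^{1/2}\Big(\sum_{y\in\fp}\mu(y)^{2}\Big)^{1/2}\ =\ |T|^{1/2}\bigl(\phi(0)/p\bigr)^{1/2},
\]
so $|T|>\Delta^{4}p/(2^{12}\phi(0))$. Applying the upper bound on $|S_1|$ from \eqref{eq:2.12} once more then gives
\[
\frac{\Delta^{5}}{2^{15}}|S_1|\ <\ \frac{\Delta^{5}}{2^{15}}\cdot\frac{8p}{\Delta\phi(0)}\ =\ \frac{\Delta^{4}p}{2^{12}\phi(0)}\ <\ |T|,
\]
which is \eqref{eq:2.20}.

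The step needing the right idea is the last conversion. The naive route --- $|T|\ge\bigl(\sum_{y\in T}\mu(y)\bigr)/\supnorm{\mu}$ together with $\supnorm{\mu}\le(\phi(0)/p)^{1/2}$ --- is too wasteful when $\phi(0)$ is small; one must instead average $\mu$ over $T$ in $\ell^{2}$, so as to exploit the full spread of $\mu$ encoded in $\sum_{y}\mu(y)^2=\phi(0)/p$. That the constants close exactly, $64^{2}=2^{12}$ and $2^{15}/8=2^{12}$, is a reassuring check that this is the intended argument.
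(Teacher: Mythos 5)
Your proof is correct and follows essentially the same route as the paper: split \eqref{eq:2.18} over $T$ and its complement to isolate the $T$-mass, bound the intersections trivially and use the upper bound in \eqref{eq:2.12} to get $\mu(T)>\Delta^2/64$, then convert to $|T|$ via Cauchy--Schwarz and Parseval, and finish with \eqref{eq:2.12} once more. The only cosmetic difference is that the paper factors out $|S_2|$ where you factor out $|S_1|$, which is immaterial since $|S_2|\le|S_1|$ and the bound in \eqref{eq:2.12} is applied identically.
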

\begin{proof}
$$
|S_2| \mu(T) = |S_2| \sum_{y \in T}\mu(y) \geq \sum_{y \in T} |S_2
\cap y^{-1} S_2| \mu(y)
$$
\begin{equation}
  \label{eq:2.21}
= \sum_{y \in \fp^\times} |S_2 \cap y^{-1} S_2| \mu(y) - \sum_{y \in
\fp^\times \backslash T} |S_2 \cap y^{-1} S_2| \mu(y) \geq
\frac{\Delta p}{8 \phi(0)} > \frac{\Delta^2}{64} |S_2|
\end{equation}
by \eqref{eq:2.18} and from the definition of $T$, and then by
\eqref{eq:2.12}  and the trivial bound $|S_2| \leq |S_1|$, so that
$\mu(T)>\Delta^2/64$.

On the other hand, by Cauchy-Schwartz and Parseval's identity,
$$
\mu(T) \leq |T|^{1/2} \left( \sum_{x \in T} \mu(x)^2 \right)^{1/2}
$$
$$
\leq |T|^{1/2} \left( \frac{1}{p} \sum_{\xi \in \fp} |\muhat(\xi)|^2
\right)^{1/2} = \left( \frac{|T| \phi(0)}{p} \right)^{1/2},
$$
so that $|T|  \geq p\Delta^4/(2^{12}\phi(0))
>(\Delta^5/2^{15})|S_1|$, by \eqref{eq:2.12}.
\end{proof}

Thus, by shrinking $T$ if necessary, we have found a set $T$ such
that
$$
 (\Delta^5/2^{15}) |S_2| \leq |T| \leq |S_2|
$$
with the property that for all $y \in T$,
\begin{equation}
  \label{eq:intersections-bound}
|S_2 \cap y^{-1}S_2| > \frac{\Delta p}{8 \phi(0)}
>\frac{ \Delta^2}{2^6} |S_1|
\geq \frac{ \Delta^2}{2^6} |S_2|
\end{equation}
by \eqref{eq:2.12}.

Let $ G := \{(x,y) : x \in S_2, y \in T, xy \in S_2 \} \subset S_2
\times T \subset \fp^\times \times \fp^\times$.  By
(\ref{eq:intersections-bound}), the number of $x$ such that $(x,y)
\in G$ is at least $2^{-6} \Delta^2 |S_2|$ for each $y \in T$.
Therefore, since $|T|\geq 2^{-15} \Delta^{5} |S_2|$, we find that
$$
|G|  \geq 2^{-6} \Delta^2 |S_2| \cdot 2^{-15} \Delta^{5} |S_2| =
(\Delta/8)^{7} |S_2|^2.
$$
By the definition of $G$ we know that
$$
\{ st: \ (s,t) \in G\} \subset S_2;
$$
so, with $g$ a primitive root modulo $p$ and defining
  $\log_{g,p}(s)$ to be the smallest integer $m \geq 0$ such that $g^m
  \equiv s \mod p$, and  by taking
$A=\{ \log_{g,p} s:\ s\in S_2\},\ B=\{ \log_{g,p} t:\ t\in T\}$ with 
$N=|S_2|$ and $\alpha=(\Delta/8)^7$ in Theorem~\ref{t:bgs}, we
obtain a subset $A'$ of $A$, with $|A'| > (\Delta^{28}/2^{99}) |A|$,
for which
$$
|A'+A'| \leq (2^{205}/\Delta^{56}) N < (2^{304}/\Delta^{84})  |A'| .
$$
Therefore $S_3 =\{ g^a:\ a\in A'\}$ is a subset of $S_2$ for which
\begin{equation}
\label{eq:s3-vs-s1-size} |S_3| > (\Delta^{28}/2^{100}) |S_1| ,
\end{equation}
by Lemma~\ref{l:size-of-s1}, and
$$
|S_3 \cdot S_3| \leq (2^{304}/\Delta^{84}) |S_3| .
$$

\subsection{Additive stability}
We finish the proof of Proposition \ref{p:proposition2.1} by finding
a subset $S_4$ of $S_3$ with a small sum set. We first show that
$S_3$ exhibits ``statistical additive stability''; to do this we
only need to use that $S_3 \subset S_1$, together with the
definition of $S_1$.
\begin{lem}  If \eqref{eq:2.5} and \eqref{eq:2.6} hold then
\begin{equation}
\label{eq:2.25} \sum_{x_1, x_2 \in S_3} \phi(x_1 - x_2)
>
2^{-6} \Delta^2 \phi(0) |S_3|^2
\end{equation}

\end{lem}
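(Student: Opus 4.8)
The plan is to read the left side of \eqref{eq:2.25} as a weighted $\ell^2$–norm of the Fourier transform of $1_{S_3}$, and to bound it from below by a Cauchy--Schwarz applied to the linear functional $\sum_{x_1\in S_3}\phi(x_1)$, which is large precisely because $S_3\subset S_1$.

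First I would record two identities obtained by expanding $\phi(x)=\sum_{\xi\in\fp}|\muhat(\xi)|^2\psi(x\xi)$ and writing $\widehat{1_{S_3}}(\xi)=\sum_{x\in S_3}\psi(x\xi)$:
\[
\sum_{x_1,x_2\in S_3}\phi(x_1-x_2)=\sum_{\xi\in\fp}|\muhat(\xi)|^2\,|\widehat{1_{S_3}}(\xi)|^2,
\qquad
\sum_{x_1\in S_3}\phi(x_1)=\sum_{\xi\in\fp}|\muhat(\xi)|^2\,\widehat{1_{S_3}}(\xi).
\]
The first sum is manifestly real and nonnegative; the second is real because $\phi$ is real-valued, so I may replace $\widehat{1_{S_3}}(\xi)$ there by $|\widehat{1_{S_3}}(\xi)|$ at the cost of turning the identity into an inequality. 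Applying Cauchy--Schwarz to the resulting bound with the splitting $|\muhat(\xi)|^2|\widehat{1_{S_3}}(\xi)|=|\muhat(\xi)|\cdot\bigl(|\muhat(\xi)|^2|\widehat{1_{S_3}}(\xi)|^2\bigr)^{1/2}$, and using Parseval together with \eqref{eq:phi-zero-bound} in the form $\sum_{\xi}|\muhat(\xi)|^2=p\sum_x\mu(x)^2=\phi(0)$, I get
\[
\sum_{x_1\in S_3}\phi(x_1)\;\le\;\phi(0)^{1/2}\Bigl(\sum_{x_1,x_2\in S_3}\phi(x_1-x_2)\Bigr)^{1/2},
\quad\text{hence}\quad
\sum_{x_1,x_2\in S_3}\phi(x_1-x_2)\;\ge\;\frac{\bigl(\sum_{x_1\in S_3}\phi(x_1)\bigr)^2}{\phi(0)}.
\]

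Finally I would invoke $S_3\subset S_1$: by the definition of $S_1$ every $x_1\in S_3$ satisfies $\phi(x_1)>\tfrac18\Delta\phi(0)$, so $\sum_{x_1\in S_3}\phi(x_1)>\tfrac18\Delta\phi(0)|S_3|$, and substituting this into the last display yields $\sum_{x_1,x_2\in S_3}\phi(x_1-x_2)>2^{-6}\Delta^2\phi(0)|S_3|^2$, which is \eqref{eq:2.25}. I do not expect a genuine obstacle here; the one delicate point is the reality/positivity bookkeeping that legitimizes passing to absolute values on the Fourier side, which is why I would re-derive both identities from scratch rather than quote them, and take care that the normalizations of $\muhat$ and of Parseval match. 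One can also avoid Fourier altogether by writing $\phi=p\,\mu\ast\mu^-$, noting that $\sum_{x_1,x_2\in S_3}(\mu\ast\mu^-)(x_1-x_2)=\sum_{b\in\fp}\bigl(\sum_{x\in S_3}\mu(b-x)\bigr)^2$ while $\sum_{b\in\fp}\mu(b)\sum_{x\in S_3}\mu(b-x)=\tfrac1p\sum_{x_1\in S_3}\phi(x_1)$, and running the same Cauchy--Schwarz in physical space.
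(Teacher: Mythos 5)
Your proof is correct. The core inequality you derive,
\[
\sum_{x_1,x_2\in S_3}\phi(x_1-x_2)\;\ge\;\frac{\bigl(\sum_{x\in S_3}\phi(x)\bigr)^2}{\phi(0)},
\]
combined with $\phi(x)>\tfrac18\Delta\phi(0)$ for $x\in S_3\subset S_1$, is exactly the one the paper uses. The only difference is where the Cauchy--Schwarz is performed: you Fourier-expand $\phi$, write both sides of \eqref{eq:2.25} as sums over $\xi$ weighted by $|\muhat(\xi)|^2$, and apply Cauchy--Schwarz on the frequency side with the split $|\muhat(\xi)|\cdot|\muhat(\xi)|\,|\widehat{1_{S_3}}(\xi)|$, using $\sum_\xi|\muhat(\xi)|^2=\phi(0)$; the paper stays in physical space, writes $\tfrac1p\sum_{x\in S_3}\phi(x)=\sum_y\mu(y)\sum_{x\in S_3}\mu(x+y)$, and applies Cauchy--Schwarz there with the split $\mu(y)\cdot\sum_{x\in S_3}\mu(x+y)$, using $\sum_y\mu(y)^2=\phi(0)/p$. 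These two computations are Parseval-dual to one another and give literally the same bound, as you yourself note in the closing remark, where you sketch the paper's physical-space variant. One small bookkeeping point in your Fourier version that you correctly flagged and handled: $\widehat{1_{S_3}}(\xi)$ is not a priori nonnegative, so you must pass from $\sum_\xi|\muhat(\xi)|^2\widehat{1_{S_3}}(\xi)$ (real, since $\phi$ is real) to $\sum_\xi|\muhat(\xi)|^2|\widehat{1_{S_3}}(\xi)|$ before applying Cauchy--Schwarz; the paper's physical-space version avoids this because all the factors $\mu(x+y)$ are already nonnegative. Neither approach buys anything over the other here; they differ only in taste.
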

\begin{proof}
Recalling that $\phi(x) = p (\mu * \mu^-)(x)$, we find, using the
Cauchy-Schwarz inequality, that
$$
\left( \frac 1p \sum_{x \in S_3} \phi(x) \right)^2= \left(
 \sum_{y \in \fp} \mu(y) \sum_{x \in S_3} \mu(x+y) \right)^2 \leq
\sum_{y \in \fp} \mu(y)^2 \cdot \sum_{y \in \fp} \left( \sum_{x \in
S_3} \mu(x+y) \right)^2
$$
$$
 = \frac{\phi(0)}p \sum_{x_1, x_2 \in S_3}
\sum_{y \in \fp} \mu(x_1+y) \mu(x_2+y) = \frac{\phi(0)}{p^2}
\sum_{x_1, x_2 \in S_3} \phi(x_1 - x_2) .
$$
Now $\sum_{x \in S_3} \phi(x) > \frac{\Delta}{8} \phi(0) |S_3|$,
since $S_3 \subset S_1$, and the lemma follows.
\end{proof}

To obtain an additively stable subset we will, as before, use
Theorem~\ref{t:bgs}.  First, let
\begin{equation}
\label{eq:2.27} S_0 := \{ x \in \fp : \phi(x) > 2^{-7} \Delta^2
\phi(0) \}
\end{equation}
Then
$$
|S_0| \leq \frac{2^{7}}{\Delta^2\phi(0)} \sum_{x \in S_0} \phi(x)
\leq \frac{2^{7} p}{\Delta^2\phi(0)}  \leq \frac{2^{8}}{\Delta^3}
|S_1| < \frac{2^{108}}{\Delta^{31}} |S_3|
$$
by (\ref{eq:2.12}) and then (\ref{eq:s3-vs-s1-size}).

Using $S_0, S_3$ we can now define a fairly large graph $G'$.
\begin{lem} If \eqref{eq:2.5} and \eqref{eq:2.6} hold then
$$
G' := \{ (x_1, -x_2) \in S_3 \times (-S_3) : x_1 - x_2 \in S_0 \}
\subset S_3 \times (-S_3).
$$
has at least $2^{-7} \Delta^2 |S_3|^2$ elements.
\end{lem}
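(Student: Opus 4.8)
The plan is to split the lower bound \eqref{eq:2.25} according to whether the difference $x_1-x_2$ lies in $S_0$ or not, discard the contribution of the differences outside $S_0$ using the threshold in the definition \eqref{eq:2.27}, and then estimate the surviving contribution trivially from above by $\phi(0)$ times the number of surviving pairs.

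Concretely, I would first write
$$
\sum_{x_1,x_2 \in S_3} \phi(x_1-x_2)
= \sum_{\substack{x_1,x_2\in S_3\\ x_1-x_2\in S_0}} \phi(x_1-x_2)
+ \sum_{\substack{x_1,x_2\in S_3\\ x_1-x_2\notin S_0}} \phi(x_1-x_2).
$$
For the second sum, the definition \eqref{eq:2.27} of $S_0$ gives $\phi(x_1-x_2)\le 2^{-7}\Delta^2\phi(0)$ for every term, so that sum is at most $2^{-7}\Delta^2\phi(0)|S_3|^2$. Combining this with \eqref{eq:2.25}, the first sum must exceed $2^{-6}\Delta^2\phi(0)|S_3|^2 - 2^{-7}\Delta^2\phi(0)|S_3|^2 = 2^{-7}\Delta^2\phi(0)|S_3|^2$.

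Finally, the number of pairs $(x_1,x_2)\in S_3\times S_3$ with $x_1-x_2\in S_0$ is precisely $|G'|$, and for each such pair $\phi(x_1-x_2)\le\phi(0)$ by \eqref{eq:phi-zero-bound}; hence the first sum above is at most $\phi(0)\,|G'|$. Since $\phi(0)=p\sum_{x\in\fp}\mu(x)^2>0$, we may divide by it to obtain $|G'|\ge 2^{-7}\Delta^2|S_3|^2$, which is the claim.

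I do not expect any genuine obstacle here: the argument is a single averaging/pigeonhole step. The only points needing a little care are that $\phi(0)>0$, so that the final division is legitimate, and the bookkeeping of constants — the threshold $2^{-7}$ in \eqref{eq:2.27} is chosen exactly so that the discarded portion of \eqref{eq:2.25} is at most half of the total, leaving the same quantity $2^{-7}\Delta^2\phi(0)|S_3|^2$ to be absorbed into $|G'|$.
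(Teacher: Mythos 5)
Your argument is correct and is essentially identical to the paper's: both split $\sum_{x_1,x_2\in S_3}\phi(x_1-x_2)$ according to membership of $x_1-x_2$ in $S_0$, bound the off-$S_0$ part by $2^{-7}\Delta^2\phi(0)|S_3|^2$ via the threshold in \eqref{eq:2.27}, bound the on-$S_0$ part by $\phi(0)|G'|$ via $\phi\le\phi(0)$, and combine with \eqref{eq:2.25}. The paper merely presents the same chain starting from $|G'|\phi(0)\ge\sum_{G'}\phi(x_1-x_2)$ rather than splitting first; your remark that $\phi(0)>0$ (so the division is legitimate) is a small extra care the paper leaves implicit.
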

\begin{proof}
We have
$$
|G'| \cdot \phi(0) \geq \sum_{ (x_1, -x_2) \in G'} \phi(x_1 - x_2)
$$
$$
= \sum_{ x_1, x_2 \in S_3} \phi(x_1 - x_2) - \sum_{ (x_1, -x_2) \in
S_3 \times (-S_3) \backslash G'} \phi(x_1 - x_2)
$$
$$
\geq 2^{-6} \Delta^2 \phi(0) |S_3|^2-2^{-7} \Delta^2 \phi(0)|S_3|^2
$$
by \eqref{eq:2.25} and \eqref{eq:2.27}, and the result follows.
\end{proof}

Since $\{ x_1 - x_2:\  (x_1, -x_2) \in G'\} \subset S_0$ we can
apply Theorem~\ref{t:bgs} with $A=S_3,\ B=-S_3,\ G=G',\
N=(2^{108}/\Delta^{31})|S_3|$ and $\alpha = \Delta^{64}/2^{223}$ to
obtain a subset $S_4 \subset S_3$ with
\begin{equation}
\label{eq:s3-vs-s4-size} |S_4| > \frac{\Delta^{256}}{2^{907}}N =
\frac{\Delta^{225}}{2^{799}} |S_3|
\end{equation}
for which
$$
|S_4+S_4|  < \frac{2^{1821}}{\Delta^{512}} N =
\frac{2^{1929}}{\Delta^{543}}  |S_3| < \frac{2^{2728}}{\Delta^{768}}
|S_4| .
$$
Moreover, since $S_4 \subset S_3$, we find that
$$
|S_4 \cdot S_4| \leq |S_3 \cdot S_3| < (2^{304}/\Delta^{84})  |S_3|
 < (2^{1103}/\Delta^{309})  |S_4| .
$$
Finally, by \eqref{eq:2.12}, then (\ref{eq:s3-vs-s4-size}),
(\ref{eq:s3-vs-s1-size}), and Lemma~\ref{l:size-of-s1}, we have
$$
\frac{8p}{\Delta\phi(0)} > |S_1|\geq |S_4| >
\frac{\Delta^{225}}{2^{799}} |S_3| > \frac{\Delta^{253}}{2^{899}}
 |S_1|   > \frac{\Delta^{254}}{2^{900}}  \  \frac{p}{ \phi(0)}
.
$$
Taking $S=S_4$ we have found a set with the desired properties.

\section{Proof of Theorem~\ref{t:main}}
\label{sec:proof-theorem}
\subsection{Preliminaries}
\label{subsec:prelims}

Let $\mu$ be a given probability measure on $\fp$.  Recall that the
Fourier transform of $\mu$ was defined to be $\muhat(\xi) := \sum_{x \in
  \fp} \mu(x) \psi(x \xi)$, and hence
$\overline{\muhat(\xi)}=\muhat(-\xi)$.  With this normalization,
Parseval's formula reads as $ p \sum_{x \in \fp} |\mu(x)|^2 =
\sum_{\xi \in \fp} |\muhat(\xi)|^2$.  Moreover, if $\nu$ is another
probability measure then
$$
\sum_{x \in \fp} \mu(x) \nuhat(x) = \sum_{\xi \in \fp}
\overline{\muhat(\xi)} \nu(-\xi) = \sum_{\xi \in \fp} \muhat(-\xi)
\nu(-\xi) = \sum_{\xi \in \fp} \muhat(\xi) \nu(\xi)
$$

Let $\nu := \mu \ast \mu^-$, that is $\nu(x) = \sum_{y,z: y-z=x}
\mu(y) \mu(z)$, so that $\nu(-x)=\nu(x)$ and $\nuhat(x) =
|\muhat(x)|^2$. If $\nu_k$ is the $k$-fold convolution of $\nu$,
that is
$$
\nu_k(x) :=
\sum_{\substack{y_1, y_2, \ldots y_k \in \fp \\
y_1+ y_2+ \ldots +y_k = x }} \nu(y_1)  \nu(y_2) \cdots \nu(y_k),
$$
then $\nuhat_k(x) = |\muhat(x)|^{2k} \geq 0$. Notice that
$\nu(x)=\sum_{y,z: y-z=x} \mu(y) \mu(z)\leq \max_z \mu(z)\sum_{y}
\mu(y) =\max_z \mu(z)$ for all $x$; and similarly
\begin{equation} \label{eq:bounded-sup-norm}
\max_x \nu_k(x) \leq \max_z \mu(z) \ \text{for all} \ k.
\end{equation}

We have
$$
\norm{\muh}^2 = \sum_{x \in \fp} |\muh(x)|^2 = 1/|H| .
$$
Note that $\muh(hx)=\muh(x)$ for all $h\in H$, and so
$\muhat_H(hx)=\muhat_H(x)$ for all $h\in H$, and
$\nu_k(hx)=\nu_k(x)$ for all $h\in H$ and $k\geq 1$.

\subsection{The set of large Fourier coefficients}

Given $\delta>0$, let
$$
\Lambda_\delta := \{ \xi \in \fp : |\muhat(\xi)| > p^{-\delta}\}
$$
be the set of ``large'' Fourier coefficients of $\mu$.

\begin{lem}
\label{l:lambda-delta-size} Suppose that $\mu=\muh$. We have
$$
|\Lambda_\delta| \leq p^{1+2\delta}/|H|.
$$
Also if $|\muhat_H(\xi)|>p^{-\delta}$ for some {\em nonzero} $\xi
\in \fp^\times$, then
$$
|\Lambda_\delta| \geq |H|.
$$
\end{lem}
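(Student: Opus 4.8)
The first bound follows from Parseval applied to $\muh$. We have $\sum_{\xi\in\fp}|\muhat_H(\xi)|^2 = p\sum_{x\in\fp}|\muh(x)|^2 = p/|H|$. On the other hand, by the definition of $\Lambda_\delta$, every $\xi\in\Lambda_\delta$ contributes more than $p^{-2\delta}$ to the left-hand side, so $|\Lambda_\delta|\,p^{-2\delta} < p/|H|$, which rearranges to $|\Lambda_\delta| < p^{1+2\delta}/|H|$. (The non-strict inequality in the statement is of course implied.) I expect this half to be entirely routine.

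For the second bound the key observation, already recorded in Section \ref{subsec:prelims}, is that $\muh$ is $H$-invariant: $\muh(hx)=\muh(x)$ for all $h\in H$, hence $\muhat_H(h\xi)=\muhat_H(\xi)$ for all $h\in H$ and all $\xi\in\fp$. Therefore, if $|\muhat_H(\xi_0)|>p^{-\delta}$ for some fixed nonzero $\xi_0\in\fp^\times$, then $|\muhat_H(h\xi_0)|>p^{-\delta}$ for every $h\in H$, so the entire orbit $H\xi_0 = \{h\xi_0 : h\in H\}$ is contained in $\Lambda_\delta$. Since $\xi_0\neq 0$, the map $h\mapsto h\xi_0$ is a bijection from $H$ onto $H\xi_0$ (it is the restriction of multiplication by the unit $\xi_0$), so $|H\xi_0|=|H|$, giving $|\Lambda_\delta|\geq |H|$.

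There is essentially no obstacle here; the only point requiring a moment's care is that the orbit $H\xi_0$ has full size $|H|$, which uses precisely that $\xi_0$ is a unit and $H$ is a subgroup of $\fp^\times$, so that distinct elements of $H$ are sent to distinct elements of $\fp^\times$. I would state the $H$-invariance of $\muhat_H$ explicitly, invoke it to cover the orbit, and then count. Both parts together take only a few lines.
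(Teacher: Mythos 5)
Your proof is correct and follows the same route as the paper: Parseval plus the trivial lower bound $|\muhat(\xi)|^2 > p^{-2\delta}$ on $\Lambda_\delta$ for the upper bound, and $H$-invariance of $\muhat_H$ together with the observation that the orbit $H\xi_0$ has full size $|H|$ for the lower bound. You are slightly more explicit than the paper about why $|H\xi_0|=|H|$, but the substance is identical.
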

\begin{proof} For any measure $\mu$ on $\fp$ we have
$$
|\Lambda_\delta| \leq p^{2\delta} \sum_{\xi \in \Lambda_\delta}
|\muhat(\xi)|^2 \leq p^{2\delta} \sum_{\xi \in \fp} |\muhat(\xi)|^2
=p^{1+2\delta} \sum_{x \in \fp} |\mu (x)|^2,
$$
and the first result follows since this last sum equals $1/|H|$ for
$\mu=\muh$. For the second result note that if
$\xi\in\Lambda_\delta$  then $|\muhat_H(h\xi)|=|\muhat_H(\xi)|
> p^{-\delta}$ for all $h\in H$, so that $ h\xi\in\Lambda_\delta$ for all $h\in H$.
\end{proof}

We will now show that it is possible to find $k,\delta$ so that the
support of $\nuhat_k$ is, in $L^2$-sense, essentially given by
$\Lambda_\delta$.

\begin{prop}
\label{p:finding-Lambda-sub-delta} For any measure $\mu$ on $\fp$,
where $p \geq 3$, and any $\eta \geq 5/(p^3 \log p)$, there exists an
integer $k \geq 4$ and
$$
\delta \in (0, \eta/k^2)
$$
such that
\begin{equation} \label{eq:lambda-delta-size-vs-l2-norm}
p^{-\eta} |\Lambda_\delta| \leq \sum_{\xi \in \fp}
|\nuhat_k(\xi)|^2 \leq p^{\eta} |\Lambda_\delta|
\end{equation}
and, in particular,
\begin{equation} \label{eq:l2-support-is-lambda-delta}
\sum_{\xi \in \fp} |\nuhat_k(\xi)|^2 \leq p^{2\eta} \sum_{\xi \in
\Lambda_\delta} |\nuhat_k(\xi)|^2.
\end{equation}
\end{prop}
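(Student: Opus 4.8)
### Proof plan for Proposition~\ref{p:finding-Lambda-sub-delta}

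The plan is to exploit the fact that $|\muhat(\xi)|\le 1$ always, so that the sequence of $L^2$-norms $N_k := \sum_{\xi\in\fp}|\nuhat_k(\xi)|^2 = \sum_{\xi\in\fp}|\muhat(\xi)|^{4k}$ is non-increasing in $k$, and to run a pigeonhole argument over a suitable range of pairs $(k,\delta)$. First I would set up a two-parameter family: for each integer $k$ in a range $4\le k\le K$ (with $K$ chosen explicitly in terms of $\eta$ and $p$), take $\delta=\delta_k$ of the form roughly $\eta/k^2$ scaled so that the intervals $(p^{-\delta_k})$ partition the relevant range of values of $|\muhat(\xi)|$; the Fourier coefficients $|\muhat(\xi)|$ lie in $[0,1]$, and a coefficient contributes to $\Lambda_{\delta}$ precisely when $|\muhat(\xi)|>p^{-\delta}$. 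The goal is to find one index $k$ for which the ``mass'' $N_k$ and the ``count'' $|\Lambda_{\delta_k}|$ are comparable up to a factor $p^{\eta}$.

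The key step is a telescoping/counting estimate. Partition $\fp$ according to the dyadic-in-exponent size of $|\muhat(\xi)|$: say $\xi$ is in level $j$ if $p^{-\delta_{j+1}}<|\muhat(\xi)|\le p^{-\delta_j}$ for a decreasing sequence of thresholds. Then $|\Lambda_{\delta_j}|$ is the number of $\xi$ in levels $0,1,\dots,j-1$, while $N_k=\sum_\xi |\muhat(\xi)|^{4k}$ is dominated, for $k$ large relative to the level, by the top levels. The point is that if for \emph{every} admissible $k$ we had $N_k > p^{\eta}|\Lambda_{\delta_k}|$ or $N_k<p^{-\eta}|\Lambda_{\delta_k}|$, then iterating the relation between consecutive $k$ would force $|\muhat(\xi)|$ to be either too concentrated or too spread out, contradicting either $|\Lambda_\delta|\le p$ (there are only $p$ frequencies) or $N_k\ge |\muhat(0)|^{4k}=1$ (the trivial lower bound from $\xi=0$). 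Concretely, $1\le N_k\le p$ for all $k$, so $\log_p N_k\in[0,1]$; the same holds for $\log_p|\Lambda_{\delta_k}|$. Subdividing $[0,1]$ into $O(1/\eta)$ subintervals and choosing $K\approx 1/\eta$ (roughly), a pigeonhole on the pair $(\log_p N_k,\log_p|\Lambda_{\delta_k}|)$ as $k$ ranges over $\{4,\dots,K\}$ produces two indices where these quantities barely move, and monotonicity of $N_k$ together with the nesting $\Lambda_{\delta}\subset\Lambda_{\delta'}$ for $\delta\le\delta'$ pins them together, giving \eqref{eq:lambda-delta-size-vs-l2-norm}. The lower bound $\eta\ge 5/(p^3\log p)$ is exactly what is needed so that $K$ can be taken as a genuine integer $\ge 4$ and the subinterval width $\eta$ is not so small that the range of $k$'s blows past where $\delta_k<\eta/k^2$ still makes sense.

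For the ``in particular'' clause \eqref{eq:l2-support-is-lambda-delta}: once \eqref{eq:lambda-delta-size-vs-l2-norm} holds with this $k$ and $\delta$, I would bound the contribution of the \emph{complement} of $\Lambda_\delta$ to $N_k$. On $\fp\setminus\Lambda_\delta$ we have $|\muhat(\xi)|\le p^{-\delta}$, hence $|\nuhat_k(\xi)|=|\muhat(\xi)|^{2k}\le p^{-2k\delta}\cdot|\muhat(\xi)|^{2k}\cdot$ — more simply, $|\muhat(\xi)|^{4k}\le p^{-4k\delta}\le p^{-2\delta}$ is too weak; instead write $\sum_{\xi\notin\Lambda_\delta}|\muhat(\xi)|^{4k}\le p^{-4(k-k_0)\delta}\sum_{\xi\notin\Lambda_\delta}|\muhat(\xi)|^{4k_0}$ is circular, so the clean route is: $\sum_{\xi\notin\Lambda_\delta}|\muhat(\xi)|^{4k}\le p\cdot p^{-4k\delta}$, and since $\delta<\eta/k^2$ one cannot directly make this small. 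The correct argument is that $\sum_{\xi\notin\Lambda_\delta}|\nuhat_k(\xi)|^2\le p^{-2\delta}\sum_{\xi\in\fp}|\nuhat_{k}(\xi)|^2\cdot p^{2\delta}$... — here the honest move is to use \eqref{eq:lambda-delta-size-vs-l2-norm} twice, applied at the same $k$: the tail satisfies $\sum_{\xi\notin\Lambda_\delta}|\muhat(\xi)|^{4k}\le p\cdot p^{-4k\delta}$, and comparing with $N_k\ge p^{-\eta}|\Lambda_\delta|\ge p^{-\eta}$ (when $\Lambda_\delta\neq\emptyset$, which the hypothesis context guarantees via $\xi=0\in\Lambda_\delta$), one gets tail $\le p^{1-4k\delta}\le p^{-\eta}\cdot N_k\cdot(\text{small})$ once $k\delta$ is bounded below appropriately — and $\delta$ being chosen comparable to $\eta/k^2$ (not merely less) makes $4k\delta$ of order $\eta/k$, which is \emph{not} enough, so in fact the proposition's $\delta$ must be chosen near the \emph{top} of its allowed interval; I would phrase the pigeonhole to deliver $\delta$ with $k\delta\gtrsim\eta$ rather than just $\delta<\eta/k^2$, then the tail is $\le p^{1-4k\delta}\le p^{-\eta}|\Lambda_\delta|\le p^{\eta}\sum_{\xi\in\Lambda_\delta}|\nuhat_k(\xi)|^2\cdot p^{-2\eta}$, whence $N_k\le (1+p^{-2\eta})^{-1}\cdots\le p^{2\eta}\sum_{\xi\in\Lambda_\delta}|\nuhat_k(\xi)|^2$.

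The main obstacle, and the step demanding the most care, is calibrating the two-parameter pigeonhole so that the \emph{same} $k$ simultaneously satisfies the comparability \eqref{eq:lambda-delta-size-vs-l2-norm} and has $\delta$ large enough (relative to $1/k$) to kill the tail for \eqref{eq:l2-support-is-lambda-delta}, all while respecting $\delta<\eta/k^2$, $k\ge 4$, and the floor $\eta\ge 5/(p^3\log p)$. This is a bookkeeping problem: one must choose the number of $k$-values (hence the maximal $k$) and the threshold spacing so that the $O(1/\eta)$ available ``cells'' for $(\log_p N_k,\log_p|\Lambda_{\delta_k}|)$ are outnumbered by the $k$-range, and the constant $5$ and the exponent $3$ in the hypothesis on $\eta$ are precisely the slack needed to make this counting go through for all $p\ge 3$.
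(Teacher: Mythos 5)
Your high-level intuition is in the right place --- you correctly identify that $N_k:=\sum_\xi|\muhat(\xi)|^{4k}$ is non-increasing, that a pigeonhole over a range of $k$'s is wanted, and that $\eta\geq 5/(p^3\log p)$ is the floor that makes the range of admissible $k$'s work out. However, there are two genuine gaps.

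\medskip

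\textbf{The ``in particular'' clause is not just bookkeeping; your proposed fix is self-contradictory.} You correctly notice that a naive tail estimate $\sum_{\xi\notin\Lambda_\delta}|\muhat(\xi)|^{4k}\le p^{1-4k\delta}$ is not enough when $\delta<\eta/k^2$, because then $k\delta<\eta/k$, and you conclude that one must instead arrange $k\delta\gtrsim\eta$. But with $k\geq 4$, the constraint $\delta<\eta/k^2$ forces $k\delta<\eta/k\leq \eta/4$, so $k\delta\gtrsim\eta$ is \emph{impossible} inside the stated hypotheses. You cannot repair this by ``choosing $\delta$ near the top of its interval.'' The point you are missing is that \eqref{eq:l2-support-is-lambda-delta} requires no tail estimate at all: the lower bound in \eqref{eq:lambda-delta-size-vs-l2-norm} is established by showing the stronger statement
$$
\sum_{\xi\in\Lambda_\delta}|\nuhat_k(\xi)|^2 > p^{-4k\delta}|\Lambda_\delta| \geq p^{-\eta}|\Lambda_\delta|,
$$
using $|\muhat(\xi)|>p^{-\delta}$ on $\Lambda_\delta$ together with $4k\delta<4\eta/k\leq\eta$. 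Feeding this into the upper bound $\sum_\xi|\nuhat_k(\xi)|^2\leq p^\eta|\Lambda_\delta|$ immediately gives $\sum_\xi|\nuhat_k(\xi)|^2\leq p^{2\eta}\sum_{\xi\in\Lambda_\delta}|\nuhat_k(\xi)|^2$. The tail never needs to be bounded directly.

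\medskip

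\textbf{The pigeonhole as you describe it is not the one that closes the argument.} You propose to pigeonhole the pair $(\log_p N_k,\log_p|\Lambda_{\delta_k}|)$ into $O(1/\eta)$ cells and extract two indices where both barely move; but two nearby values of $\log_p N_k$ do not by themselves yield $N_k\leq p^\eta|\Lambda_{\delta_k}|$ at either index --- you still need a quantitative bridge between the $L^2$ mass and the level-set count. The bridge in the paper is the inequality
$$
\sum_{\xi\in\fp}|\nuhat_k(\xi)|^2\leq|\Lambda_{1/k}|+p\cdot p^{-4}=|\Lambda_{1/k}|+p^{-3},
$$
which holds for every $k$ because off $\Lambda_{1/k}$ each summand is at most $(p^{-1/k})^{4k}=p^{-4}$. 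This is then combined with the deliberate coupling $\delta_i:=1/k_{i+1}$ and the rapidly-growing sequence $k_{i+1}=[k_i^2/\eta]+1$: if the upper bound in \eqref{eq:lambda-delta-size-vs-l2-norm} failed for every $i\leq M$, one gets $p^\eta|\Lambda_{1/k_{i+1}}|<|\Lambda_{1/k_i}|(1+p^{-3})$ at each step, and iterating $M=2([1/\eta]+1)$ times drives $|\Lambda_{1/k_M}|$ below $1$, contradicting $0\in\Lambda_{1/k_M}$. Your plan has no analogue of the per-$k$ inequality linking $N_k$ to $|\Lambda_{1/k}|$, nor of the crucial choice to set $\delta$ for index $i$ equal to $1/k_{i+1}$ (the \emph{next} threshold), and without these the pigeonhole does not deliver the conclusion.
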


\begin{proof}
For any $k \in \N$  we have
\begin{equation}  \label{eq:0.13}
\sum_{\xi \in \fp} |\nuhat_k(\xi)|^2 =   \sum_{\xi \in
\Lambda_{1/k}} |\nuhat_k(\xi)|^2 + \sum_{\xi \not \in \Lambda_{1/k}}
|\nuhat_k(\xi)|^2 \leq |\Lambda_{1/k}| + p (p^{-1/k})^{4k} =
|\Lambda_{1/k}|+1/p^3
\end{equation}
since each $\nuhat_k(\xi) \leq 1$.

We define a sequence of integers $k_0 = 4<k_1<\dots$ where
$k_{i+1} = [k_i^2/\eta]+1$ for each $i\geq 0$, and let  $\delta_i
= 1/k_{i+1}$ for each $i$. Note that $k_i^2/\eta <
k_{i+1}=1/\delta_i$ so that $k_i\delta_i<\eta/k_i\leq \eta/4$.
Since $\nuhat_{k_i}(\xi) = |\muhat_H(\xi)|^{2k_i}$, we have
$$
\sum_{\xi \in \Lambda_{\delta_i}} |\nuhat_{k_i}(\xi)|^2 >
|\Lambda_{\delta_i}| \cdot p^{-4k_i \delta_i} \geq
|\Lambda_{\delta_i}| \cdot p^{-\eta} .
$$
We note that the lower bound in
\eqref{eq:lambda-delta-size-vs-l2-norm} follows 
from this, as well as \eqref{eq:l2-support-is-lambda-delta}, once we
establish the upper bound in
\eqref{eq:lambda-delta-size-vs-l2-norm}.

Now, there exists an integer $i\in [0,M]$, where $M=2([1/\eta]+1)$,
such that $\sum_{\xi \in \fp} |\nuhat_{k_i}(\xi)|^2 \leq p^{\eta}
|\Lambda_{\delta_i}|$ else
$$
p^{\eta} |\Lambda_{1/k_{i+1}}| = p^{\eta} |\Lambda_{\delta_i}| <
\sum_{\xi \in \fp} |\nuhat_{k_i}(\xi)|^2 \leq |\Lambda_{1/k_i}| +
1/p^3 \leq |\Lambda_{1/k_i}|(1 + 1/p^3)
$$
for each $i$, by (\ref{eq:0.13}), and so
$$
|\Lambda_{1/k_M}| < p^{-M \eta} |\Lambda_{1/k_0}|(1 + 1/p^3)^M
\leq p^{1-M \eta} (1 + 1/p^3)^M \leq p^{-1} (1 + 1/p^3)^M<1
$$
since $M\leq \frac 12 p^3\log p$, which is untrue (as  $0 \in
\Lambda_{1/k}$ for all $k \in \N$).

We select $k=k_i$ and $\delta=\delta_i$.
\end{proof}

\begin{remark}
Note that the proof gives us $k\ll \exp(\exp(O(1/\eta)))$.
\end{remark}

\begin{remark}
\label{rem:statistical-additive-stability}
Since the support of $\nuhat_k$ is essentially given by
$\Lambda_\delta$, it is easy to see that the same holds for
$\nuhat_{2k}$; we may interpret this as $\nu_k \ast \nu_k$ being
``similar'' to $\nu_k$, and hence that  $\nu_k$ is ``approximately
additively stable''.
\end{remark}

In the following key Lemma, the $H$-invariance of $\muh$, and hence of
$\nuhat_k$, is essential.

\begin{lem}
\label{l:smear-out-lemma} For $\mu=\muh$ and all $\xi \in \fp$, we
have
$$
\nuhat_k(\xi)^{4k}
\leq
\sum_{x \in \fp}
\nuhat_k(x \xi)^2 \nuk(x)
$$
\end{lem}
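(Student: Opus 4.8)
The plan is to exploit the $H$-invariance of $\nuhat_k$ together with the fact that $\nuhat_k(\xi) = |\muhat_H(\xi)|^{2k} \geq 0$, so that $\nuhat_k$ takes its $L^2$-mass on $H$-orbits. The key observation is that $\nuhat_k(h\xi) = \nuhat_k(\xi)$ for every $h \in H$, hence for a fixed $\xi$ the quantity $\nuhat_k(\xi)$ equals the average $\frac{1}{|H|}\sum_{h\in H}\nuhat_k(h\xi)$. More usefully, writing $\mu_H$ as the uniform measure on $H$, one has $\sum_{x\in\fp}\nuhat_k(x\xi)\mu_H(x) = \nuhat_k(\xi)$ since every $x\in H$ contributes $\nuhat_k(x\xi)=\nuhat_k(\xi)$. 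The strategy is to iterate this identity: replacing $\mu_H$ by $\nu = \mu_H * \mu_H^-$ and then by its convolution powers $\nu_k$, one should be able to turn the single factor $\nuhat_k(\xi)$ into a high power of $\nuhat_k$ averaged against $\nu_k$.

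First I would record the base identity: for any $k$ and any $\xi$, since $\nu_k(x)$ is supported (in the relevant averaging sense) on elements that fix $\xi$ up to the $H$-action — more precisely, since $\nuhat_k$ is $H$-invariant and $\nu = \mu_H * \mu_H^-$ is supported on $H$ when... — actually the cleaner route is: $\sum_x \nuhat_k(x\xi)\nu(x)$. Expanding $\nu(x) = \sum_{y,z\in H,\, y-z=x}\frac{1}{|H|^2}$ and using that $\nuhat_k$ is $H$-invariant, one gets $\sum_x \nuhat_k(x\xi)\nu(x) = \frac{1}{|H|^2}\sum_{y,z\in H}\nuhat_k((y-z)\xi)$, which is not obviously $\nuhat_k(\xi)$; instead I would work multiplicatively. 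The point is that $\nuhat_k(hx\xi) = \nuhat_k(x\xi)$ for $h\in H$, so $\sum_x \nuhat_k(x\xi)\mu_H(x)\cdot(\text{anything } H\text{-invariant in }x)$ collapses. The correct base case to prove is $\sum_{x\in\fp}\nuhat_k(x\xi)\mu_H(x) = \nuhat_k(\xi)$, directly: every $x$ in the support of $\mu_H$ lies in $H$, and $\nuhat_k(x\xi) = \nuhat_k(\xi)$ by $H$-invariance applied to the argument $\xi$, while $\sum_{x\in H}\mu_H(x) = 1$.

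Next I would bootstrap via convolution and Cauchy–Schwarz (or Hölder / Jensen). Starting from $\nuhat_k(\xi) = \sum_x \nuhat_k(x\xi)\mu_H(x)$, I would express $\mu_H$ in terms of $\nu$ and then $\nu_k$, using that $\nu_k = \mu_H * \mu_H^- * \cdots$ still "sees" $H$ through its Fourier support, and apply a power-mean inequality at each stage to promote $\nuhat_k(x\xi)$ to $\nuhat_k(x\xi)^2$ while the weight becomes $\nu_k(x)$. Concretely, one wants a chain like $\nuhat_k(\xi)^2 \le \sum_x \nuhat_k(x\xi)^2\nu(x)$ (from Cauchy–Schwarz, since $\sum_x\nu(x)=1$ and $\sum_x\nuhat_k(x\xi)\nu(x)=\nuhat_k(\xi)$ by the same $H$-invariance argument applied to $\nu$ in place of $\mu_H$ — valid because $\nuhat$ is $H$-invariant too), then iterating $k$ more times roughly squares the exponent each time up to $2^{?}$; to reach the exponent $4k$ one takes $2k$ convolution steps, i.e. works with $\nu_k$ and its self-convolutions, raising $\nuhat_k(\xi)^{2}$ through $\log_2(4k/1)$-many doublings. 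I would organize this as: prove $\nuhat_k(\xi)^{2^j} \le \sum_x \nuhat_k(x\xi)^{2^j}\nu_{2^{j-1}}(x)$ by induction on $j$ using Cauchy–Schwarz, then choose $j$ so that $2^j \ge 4k$ and $2^{j-1} \le k$ — but these are incompatible, so instead the right statement interpolates with $\nu_k$ on the nose, meaning the induction is on convolution powers of $\nu$ with a careful Hölder exponent rather than naive squaring.

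The main obstacle I anticipate is getting the exact exponent $4k$ on the left and exactly $\nu_k$ (not $\nu_{k'}$ for some other $k'$) on the right to line up: the $H$-invariance gives the base identity for free, and Cauchy–Schwarz gives monotone improvement, but matching the bookkeeping — the power $4k$ presumably arises because $\nuhat_k = |\muhat_H|^{2k}$ and one is really squaring $\muhat_H$'s exponent twice more — will require either a clean Hölder inequality with the right conjugate exponents or a telescoping argument where $\sum_x \nuhat_k(x\xi)\nu_k(x) = \nuhat_k(\xi)$ is fed into $\nuhat_k(\xi)^{4k} = \big(\sum_x\nuhat_k(x\xi)\nu_k(x)\big)^{4k} \le \sum_x \nuhat_k(x\xi)^{4k}\nu_k(x) \le \sum_x \nuhat_k(x\xi)^2\nu_k(x)$, where the first inequality is Jensen (convexity of $t\mapsto t^{4k}$ against the probability measure $\nu_k$) and the last uses $\nuhat_k(x\xi)\le 1$ with $4k \ge 2$. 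That last chain, if the base identity $\sum_x\nuhat_k(x\xi)\nu_k(x)=\nuhat_k(\xi)$ holds, finishes the lemma immediately — so the real content is verifying that base identity, which follows because $\nu_k$ is a probability measure, because $\nuhat_k$ is $H$-invariant, and because $\nu_k$ is supported on... no: rather because $\sum_x \nuhat_k(x\xi)\nu_k(x)$, expanded in Fourier, equals $\sum_\eta \nuhat_k(\eta)\widehat{\nu_k}(\eta\,\cdot) $-type expression that collapses by the $H$-invariance of $\muhat_H$. I would thus spend the bulk of the write-up on that Fourier-side verification of $\sum_{x\in\fp}\nuhat_k(x\xi)\nu_k(x) = \nuhat_k(\xi)$, and then invoke Jensen.
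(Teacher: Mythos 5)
Your closing chain
\[
\nuhat_k(\xi)^{4k} = \Bigl(\sum_x\nuhat_k(x\xi)\nu_k(x)\Bigr)^{4k} \le \sum_x \nuhat_k(x\xi)^{4k}\nu_k(x) \le \sum_x \nuhat_k(x\xi)^2\nu_k(x)
\]
would indeed prove the lemma, but it hinges entirely on the ``base identity'' $\sum_{x}\nuhat_k(x\xi)\nu_k(x) = \nuhat_k(\xi)$, and this identity is \emph{false}. The correct version, $\sum_x \nuhat_k(x\xi)\mu_H(x) = \nuhat_k(\xi)$, works because $\mu_H$ is supported on $H$ and $\nuhat_k$ is $H$-invariant, so every term in the sum is literally $\nuhat_k(\xi)$. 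But $\nu_k$ is an \emph{additive} convolution power of $\mu_H\ast\mu_H^-$; it is supported on iterated difference sets of $H$, not on $H$, and the $H$-invariance of $\nuhat_k$ does nothing to collapse $\nuhat_k(x\xi)$ for such $x$. You flag this uncertainty yourself midway through (``is supported on\ldots no: rather because\ldots'') but then revert to asserting the identity at the end. A concrete counterexample: take $H=\{1,-1\}$, so $\nu$ is supported on $\{0,\pm 2\}$ with $\nu(0)=1/2$, $\nu(\pm2)=1/4$, and $\nuhat(\xi)=\cos^2(2\pi\xi/p)$. For $k=1$, the right side $\sum_x\nuhat(x\xi)\nu(x)=\tfrac34+\tfrac14\cos(8\pi\xi/p)$ disagrees with $\nuhat(\xi)=\cos^2(2\pi\xi/p)$; worse, for small $\xi$ it is strictly \emph{smaller}, so even the one-sided inequality $\nuhat_k(\xi)\le\sum_x\nuhat_k(x\xi)\nu_k(x)$ that Jensen requires can fail.

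The missing idea is a Parseval duality. The paper starts from the genuine identity
\[
\nuhat_k(\xi)^2 = \sum_{x\in\fp}\nuhat_k(x\xi)^2\,\mu_H(x),
\]
which holds by the $H$-invariance argument you correctly give for $\mu_H$ (not for $\nu_k$). Writing $\nuhat_k(x\xi)^2=\nuhat_{2k}(x\xi)$ and applying Parseval moves the expression to the spatial side, $\sum_x\nu_{2k}(-x\xi^{-1})\muhat_H(x)$. Now the probability-measure power-mean inequality $\sum_x\mu(x)f(x)\le\bigl(\sum_x\mu(x)|f(x)|^\ell\bigr)^{1/\ell}$ with $\ell=2k$ (and with $\nu_{2k}(-\cdot\,\xi^{-1})$ playing the role of the measure) replaces $\muhat_H(x)$ by $|\muhat_H(x)|^{2k}=\nuhat_k(x)$, at the cost of raising the left side to the power $2k$; this is exactly where the exponent $4k$ comes from. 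A second application of Parseval returns to the stated form $\sum_x\nuhat_k(x\xi)^2\nu_k(x)$. So the lemma is not a one-line Jensen step once an identity is verified; the substance is the two Parseval swaps bracketing the power-mean inequality, which is the mechanism your proposal lacks.
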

\begin{proof}
The case $\xi=0$ is immediate, hence we may assume that $\xi \neq
0$. Now, since $\nuhat_k(h \xi)=\nuhat_k(\xi)$  for all $h\in H$, we
have
$$
\nuhat_k(\xi)^2 = \sum_{x \in \fp} \nuhat_k(x \xi)^2 \muh(x) =
\sum_{x \in \fp}  \nu_{2k}(- x \xi^{-1}) \muhat_H(x),
$$
by Parseval's formula. Now note that if $\mu$ is any probability
measure and $l \geq 1$, then $\sum_x \mu(x)f(x)\leq (\sum_x \mu(x)
|f(x)|^l)^{1/l}$. Therefore the above gives
$$
\nuhat_k(\xi)^{4k} \leq    \sum_{x \in \fp} \nu_{2k}(- x \xi^{-1})
|\muhat_H(x)|^{2k}  = \sum_{x \in \fp} \nu_{2k}(- x \xi^{-1})
\nuhat_k(x)
$$
since $|\muhat_H(x)|^{2k} = \nuhat(x)^k = \nuhat_k(x)$ and, applying
Parseval one more time, we obtain
$$
\nuhat_k(\xi)^{4k}
\leq
\sum_{x \in \fp} \nuhat_k(-x \xi)^2 \nu_k(-x)
=
\sum_{x \in \fp} \nuhat_k(x \xi)^2 \nu_k(x)
$$
\end{proof}

We consequently obtain:

\begin{prop}
\label{p:smearing-out} With $k,\eta$ as in
Proposition~\ref{p:finding-Lambda-sub-delta}, we have
$$
p^{-10\eta}\sum_{\xi \in \fp} \nuhat_k(\xi)^{2} \leq \sum_{\xi \in
\fp} \sum_{x \in \fp} \nuhat_k(\xi)^{2} \nuhat_k(x \xi)^2 \nuk(x)
$$
\end{prop}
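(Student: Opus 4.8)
The plan is to combine Lemma~\ref{l:smear-out-lemma} with the ``essential support'' statement \eqref{eq:l2-support-is-lambda-delta} of Proposition~\ref{p:finding-Lambda-sub-delta}, together with the fact that $\delta$ is tiny compared to $k$. First I would restrict the left-hand sum $\sum_{\xi \in \fp} \nuhat_k(\xi)^2$ to $\xi \in \Lambda_\delta$, paying a factor $p^{2\eta}$ by \eqref{eq:l2-support-is-lambda-delta}. So it suffices to show
\[
p^{-8\eta}\sum_{\xi \in \Lambda_\delta} \nuhat_k(\xi)^{2} \leq \sum_{\xi \in \fp} \sum_{x \in \fp} \nuhat_k(\xi)^{2} \nuhat_k(x \xi)^2 \nuk(x).
\]
Now for each individual $\xi \in \Lambda_\delta$ I would apply Lemma~\ref{l:smear-out-lemma}, which gives $\nuhat_k(\xi)^{4k} \leq \sum_x \nuhat_k(x\xi)^2 \nuk(x)$. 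The point is that for $\xi \in \Lambda_\delta$ we have $\nuhat_k(\xi) = |\muhat_H(\xi)|^{2k} > p^{-2k\delta}$, and since $\delta < \eta/k^2$ this means $\nuhat_k(\xi)$ is extremely close to $1$; more precisely $\nuhat_k(\xi)^{4k-2} > p^{-(4k-2)\cdot 2k\delta} > p^{-8k^2\delta} > p^{-8\eta}$. Therefore
\[
\nuhat_k(\xi)^2 \cdot p^{-8\eta} < \nuhat_k(\xi)^2 \cdot \nuhat_k(\xi)^{4k-2} = \nuhat_k(\xi)^{4k} \leq \sum_{x \in \fp} \nuhat_k(x\xi)^2 \nuk(x).
\]

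Summing this inequality over $\xi \in \Lambda_\delta$ and bounding the resulting double sum from above by the full double sum over all $\xi \in \fp$ (legitimate since every summand $\nuhat_k(\xi)^2 \nuhat_k(x\xi)^2 \nuk(x)$ is nonnegative), I get
\[
p^{-8\eta} \sum_{\xi \in \Lambda_\delta} \nuhat_k(\xi)^2 \leq \sum_{\xi \in \Lambda_\delta}\sum_{x \in \fp} \nuhat_k(x\xi)^2\nuk(x) \leq \sum_{\xi \in \fp}\sum_{x \in \fp} \nuhat_k(\xi)^2\nuhat_k(x\xi)^2\nuk(x),
\]
where in the last step I also used $\nuhat_k(\xi)^2 > p^{-4k\delta} \geq p^{-\text{(something tiny)}}$... actually more simply, for $\xi \in \Lambda_\delta$ we have $\nuhat_k(\xi)^2$ close to $1$ so inserting it only costs another negligible power; bundling the two $p^{2\eta}$-type losses and the $p^{-8\eta}$ loss together yields the claimed $p^{-10\eta}$ with room to spare. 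Combining with the initial $p^{2\eta}$ loss from \eqref{eq:l2-support-is-lambda-delta} completes the argument.

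I expect the only real bookkeeping obstacle to be tracking the exact exponents of $p$ so that all the small losses ($p^{2\eta}$ from restricting to $\Lambda_\delta$, $p^{-8\eta}$-ish from the $\nuhat_k(\xi)^{4k-2}$ factor, and the cost of reinserting $\nuhat_k(\xi)^2$) add up to something bounded by $p^{10\eta}$; since $\delta < \eta/k^2$ and $k \geq 4$, there is substantial slack, so no delicate estimate is needed — the constraint $\delta < \eta/k^2$ was precisely engineered in Proposition~\ref{p:finding-Lambda-sub-delta} to make $\nuhat_k$ nearly flat on $\Lambda_\delta$. The conceptual heart is Lemma~\ref{l:smear-out-lemma}, which has already been proved, so this proposition is essentially a packaging step.
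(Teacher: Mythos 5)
Your proposal is correct and follows essentially the same route as the paper: restrict to $\Lambda_\delta$ using \eqref{eq:l2-support-is-lambda-delta} (cost $p^{2\eta}$), use $\nuhat_k(\xi) > p^{-2k\delta}$ on $\Lambda_\delta$ to trade $\nuhat_k(\xi)^2$ for a higher power of $\nuhat_k(\xi)$, then invoke Lemma~\ref{l:smear-out-lemma} and extend the sum back to all of $\fp$ by nonnegativity.

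One bookkeeping point is worth flagging. You separately estimate a loss of "about $p^{8\eta}$" from $\nuhat_k(\xi)^{4k-2} > p^{-8k^2\delta}$ and then a further "negligible" loss from reinserting $\nuhat_k(\xi)^2$; if you actually round the first factor up to $p^{8\eta}$ and then tack on any positive additional power, you exceed the $p^{10\eta}$ budget. The safe way to run your argument is to keep both exponents exact: the two costs are $(4k-2)\cdot 2k\delta$ and $4k\delta$, which sum to precisely $8k^2\delta < 8\eta$. The paper sidesteps this entirely by never discarding the $\nuhat_k(\xi)^2$ factor in the first place: it bounds $\nuhat_k(\xi)^2 \leq p^{8k^2\delta}\nuhat_k(\xi)^{4k+2}$ directly, so that $\nuhat_k(\xi)^{4k+2} = \nuhat_k(\xi)^2\cdot\nuhat_k(\xi)^{4k}$ already carries the needed $\nuhat_k(\xi)^2$ when Lemma~\ref{l:smear-out-lemma} is applied to the $\nuhat_k(\xi)^{4k}$ piece, and no reinsertion step (and hence no second $\delta$-loss to track) is required. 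Both versions land within $p^{10\eta}$; the paper's grouping is just cleaner to account for.
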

\begin{proof}
By Proposition~\ref{p:finding-Lambda-sub-delta}, we have
$$
p^{-2\eta} \sum_{\xi \in \fp} \nuhat_k(\xi)^{2} \leq \sum_{\xi \in
\Lambda_\delta} \nuhat_k(\xi)^{2} \leq p^{8k^2\delta} \sum_{\xi
\in \Lambda_\delta} \nuhat_k(\xi)^{4k+2} \leq p^{8 \eta} \sum_{\xi
\in \fp} \nuhat_k(\xi)^{4k+2}
$$
which, by Lemma~\ref{l:smear-out-lemma}, is
$$
\leq p^{8 \eta} \sum_{\xi \in \fp} \sum_{x \in \fp}
\nuhat_k(\xi)^{2} \nuhat_k(x \xi)^2 \nuk(x).
$$
\end{proof}

\begin{remark}
\label{rem:statistical-multiplicativ-stability}
Since $\nuhat_k(x \xi) \leq 1$ and $\nu_k$ is a probability measure,
we find that
$\sum_{\xi,x \in \fp} \nuhat_k(\xi)^{2} \nuhat_k(x \xi)^2 \nuk(x) \leq
\sum_{\xi \in \fp} \nuhat_k(\xi)^{2}$, so the lower bound on the
double sum in Proposition~\ref{p:smearing-out} is quite good.
Further, using Parseval on the two sums over $\xi$ (ignoring the term
$x=0$) we find that  $\sum_{y \in \fp} \nu_{2k}(y)\nu_{2k}(yx^{-1})$,
which we can interpret as a multiplicative translate of $\nu_{2k}$
with itself, is highly correlated with $\nu_k(x)$.  Thus,  the Proposition
might be interpreted as a statement of ``approximate multiplicative
stability'' of $\nu_k$.  (Since the essential support of $\nuhat_k$ is
given by $\Lambda_\delta$, the same holds for $\nuhat_{2k}$, so in some
sense  $\nu_k$ and $\nu_{2k}$ are ``similar''.)
\end{remark}

To go from statistical additive/multiplicative stability to a subset
that contradicts the sum-product Theorem, we will apply Proposition
\ref{p:proposition2.1} with $\mu = \nu_k$ and $\Delta = p^{-10
\eta}$ (and note that \eqref{eq:bounded-sup-norm} implies
\eqref{eq:2.6} provided $1/|H|<\Delta/4$), and select $\delta$ and
$k$ as in Proposition \ref{p:finding-Lambda-sub-delta}. Assume that
$|\muhat_H(\xi)|>p^{-\delta}$ for some $\xi \in \fp^\times$. We thus
obtain a set $S$ such that
$$
|S+S| + |S \cdot S| <   2^{2729} p^{7680\eta} |S| .
$$
Note that
$$ p^{-\eta} |H| \leq p^{-\eta} |\Lambda_\delta|
\leq \sum_{\xi \in \fp} |\nuhat_k(\xi)|^2 \leq p^{\eta}
|\Lambda_\delta| \leq p^{1+\eta+ 2 \delta }/|H|
$$
by (\ref{eq:lambda-delta-size-vs-l2-norm}) and
Lemma~\ref{l:lambda-delta-size}, so that
\eqref{eq:size-of-S-and-l2-norm} gives, as $2\delta<\eta$,
$$
 \frac{1}{2^{900}} \  \frac{|H|}{p^{2542\eta}}< |S| <
 8  \ \frac{p^{1+11\eta}}{|H|}.
$$
Now select $\eta = \min\{ \alpha/6000, \delta(\alpha/2)/8000\}$, so
that the sum-product Theorem \ref{t:sum-product} is violated with
$\epsilon= \alpha/2$ for $p$ sufficiently large, and thus
$|\muhat_H(\xi)|\leq p^{-\delta}$ for all $\xi \in \fp^\times$. The
Theorem follows with $\beta=\delta\gg \exp(-\exp(C/\eta))$ for some
constant $C>0$.

\section{Incomplete sums}
\label{sec:incomplete-sums}
The proof of Theorem~\ref{t:main} can fairly easily be 
extended to incomplete sums over multiplicative subgroups.
\begin{thm}
Let $g \in \fp^\times$ have multiplicative order at least $T$, and let
$H = \{ g^t : 0 \leq t < T\}$.  If $|H| = T > p^\alpha$, then
$$
\sum_{x \in H}
\psi(x) \ll 
p^{-\beta} |H|
$$
\end{thm}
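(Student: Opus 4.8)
The plan is to rerun the proof of Theorem~\ref{t:main} essentially verbatim, now with $\mu=\muh$ the uniform probability measure on the \emph{incomplete} set $H=\{g^t:0\le t<T\}$ (so $|H|=T$, since $g$ has order $\ge T$), and to suppose for contradiction that $|\muhat_H(\xi)|>p^{-\delta}$ for some $\xi\in\fp^\times$, with $\delta=\delta(\alpha)>0$ to be pinned down at the end. (Completing the sum inside the cyclic group $\langle g\rangle$ is of no help when $\mathrm{ord}(g)$ greatly exceeds $T$, so I do not go that way.) Sections~\ref{sec:addit-comb-backgr}--\ref{sec:proof-of-prop-2.1}, Proposition~\ref{p:finding-Lambda-sub-delta}, the upper bound of Lemma~\ref{l:lambda-delta-size}, and the passage from Proposition~\ref{p:proposition2.1} to a contradiction with Theorem~\ref{t:sum-product} all make no use of $H$ being a subgroup. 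The group structure is genuinely used in exactly one place: the exact multiplicative invariance $\muhat_H(h\xi)=\muhat_H(\xi)$ for $h\in H$, which enters in Lemma~\ref{l:smear-out-lemma} and, via the bound $|\Lambda_\delta|\ge|H|$ of Lemma~\ref{l:lambda-delta-size}, in the final size estimate of Section~\ref{sec:proof-theorem}. So the whole job is to supply a substitute for this one input.

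The substitute is \emph{approximate} multiplicative invariance. Reindexing $s\mapsto s+j$ gives, for every $j\in\Z$,
$$
\muhat_H(g^j\xi)-\muhat_H(\xi)=\frac1T\Bigl(\sum_{s=T}^{T+j-1}\psi(g^s\xi)-\sum_{s=0}^{j-1}\psi(g^s\xi)\Bigr),
$$
whence $|\muhat_H(g^j\xi)-\muhat_H(\xi)|\le 2|j|/T$. Hence, if $|\muhat_H(\xi)|>p^{-\delta}$, then $|\muhat_H(g^j\xi)|\ge\tfrac12|\muhat_H(\xi)|>\tfrac12 p^{-\delta}$ for every integer $j$ with $0\le j<\tfrac14 Tp^{-\delta}$; moreover the points $g^j\xi$ are pairwise distinct (as $\tfrac14 Tp^{-\delta}<T\le\mathrm{ord}(g)$), and, since $T>p^\alpha$ and $\delta<\eta<\alpha$, there are at least $\tfrac18 Tp^{-\delta}$ such $j$ once $p$ is large.

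I now thread this in. In the proof of Lemma~\ref{l:smear-out-lemma}, keep in the identity $\nuhat_k(\xi)^2=\sum_x\nuhat_k(x\xi)^2\muh(x)$ only the terms $x=g^j\xi$ with $0\le j<\tfrac14 Tp^{-\delta}$; since $\nuhat_k(g^j\xi)=|\muhat_H(g^j\xi)|^{2k}\ge 2^{-2k}\nuhat_k(\xi)$, the identity degrades to the inequality $\nuhat_k(\xi)^2\le 2^{4k+3}p^{\delta}\sum_x\nuhat_k(x\xi)^2\muh(x)$, and the rest of that proof (Parseval, the power-mean inequality, Parseval again) goes through unchanged, giving
$$
\nuhat_k(\xi)^{4k}\le\bigl(2^{4k+3}p^{\delta}\bigr)^{2k}\sum_{x\in\fp}\nuhat_k(x\xi)^2\,\nuk(x).
$$
Since $\delta<\eta/k^2$ and $k\ge4$ the prefactor is at most $C(\alpha)\,p^{\eta/2}$, so feeding the modified lemma into the proof of Proposition~\ref{p:smearing-out} produces hypothesis~\eqref{eq:2.5} of Proposition~\ref{p:proposition2.1} for $\mu=\nuk$ with $\Delta=c(\alpha)\,p^{-11\eta}$; hypothesis~\eqref{eq:2.6} is immediate from $\max_x\nuk(x)\le 1/|H|<p^{-\alpha}$ and $11\eta<\alpha$. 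In the final size estimate of Section~\ref{sec:proof-theorem} replace the use of $|\Lambda_\delta|\ge|H|$ by the direct bound
$$
\sum_{\xi\in\fp}|\nuhat_k(\xi)|^2=\sum_{\xi\in\fp}|\muhat_H(\xi)|^{4k}\ \ge\ \tfrac18 Tp^{-\delta}\cdot\bigl(\tfrac12 p^{-\delta}\bigr)^{4k}\ \ge\ 2^{-4k-3}\,|H|\,p^{-2\eta},
$$
summing once more only over the $g^j\xi$ and using $(4k+1)\delta<2\eta$; the upper bound $\sum_\xi|\nuhat_k(\xi)|^2\le p^{1+\eta+2\delta}/|H|$ is as before. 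Combined with~\eqref{eq:size-of-S-and-l2-norm} these bounds place $|S|$ between $c_1(\alpha)|H|p^{-O(\eta)}$ and $c_2(\alpha)p^{1-\alpha+O(\eta)}$, with $|S+S|+|S\cdot S|<C(\alpha)p^{O(\eta)}|S|$. Choosing $\eta=\eta(\alpha)>0$ small enough in terms of the sum-product exponent $\delta(\alpha/3)$, the set $S$ lies in the admissible range $(p^{\alpha/3},p^{1-\alpha/3})$ and contradicts Theorem~\ref{t:sum-product} for $p$ large. Hence $|\muhat_H(\xi)|\le p^{-\delta}$ for all $\xi\in\fp^\times$, and then $\sum_{x\in H}\psi(x)=|H|\,\muhat_H(1)$ yields the theorem with $\beta=\delta$ (the finitely many small $p$ being absorbed into the implied constant).

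The only real work is the bookkeeping just sketched: verifying that every loss from trading the exact invariance of $\muhat_H$ for the telescoping estimate $|\muhat_H(g^j\xi)-\muhat_H(\xi)|\le 2|j|/T$ is of the shape $p^{O(\delta)}$ or $p^{O(\eta)}$ with implied constants depending only on $\alpha$ (through $k$), so that $\eta$ can still be taken small enough --- in terms of $\delta(\alpha/3)$ --- to run the sum-product contradiction exactly as in Section~\ref{sec:proof-theorem}. No new idea is required.
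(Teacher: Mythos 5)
Your proposal is correct and is essentially the paper's own proof: both replace the exact $H$-invariance of $\muhat_H$ by the telescoping estimate $|\muhat_H(g^j\xi)-\muhat_H(\xi)|\le 2j/T$, use it to degrade the identity in Lemma~\ref{l:smear-out-lemma} to an inequality with a $p^{O(\delta)}$ loss (then absorbed into the $\Delta=p^{-O(\eta)}$ of Proposition~\ref{p:proposition2.1}), and use it again to furnish the lower bound on $\sum_\xi|\nuhat_k(\xi)|^2$ in the final size estimate. The paper merely packages the telescoping in a short auxiliary lemma (introducing $H_1=\{g^t: 0\le t<|H|p^{-\delta}/4\}$, showing $|\Lambda_\delta|\ge|H_1|$ and $|\muhat_H(h\xi)|>|\muhat_H(\xi)|/2$ for $h\in H_1$, $\xi\in\Lambda_\delta$) and takes the contradiction threshold $2p^{-\delta}$ rather than $p^{-\delta}$ so the translates of $\xi_0$ land in $\Lambda_\delta$, whereas you bound the $L^2$-norm directly without needing that; these are cosmetic differences.
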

Define $\muh, \muhat_H, \nu_k, \Lambda_\delta$ etc as before.  To
obtain a 
contradiction, we will assume that $|\muhat_H(\xi_0 )| > 2
p^{-\delta}$ for some $\xi_0 \in \fp^\times$.

We begin by showing that $\Lambda_\delta$, the set of large Fourier
coefficients, is almost of size $|H|$, and that $\muhat$ is quite
large on $\Lambda_\delta \cdot H_1 $ for a fairly large subset $H_1 \subset H$.
\begin{lem}
Let
$$
H_1 := \{ g^t : 0 \leq t < |H| p^{-\delta}/4\}.
$$
If $|\muhat(\xi_0)|>2 p^{-\delta}$ for some $\xi_0 \in \fp^\times$, then 
$$
|\Lambda_\delta| \geq |H_1|
$$
Moreover, if $\xi \in \Lambda_\delta$ and $h \in H_1$, then 
$$
|\muhat_H(h \xi)| > |\muhat_H(\xi)|/2.
$$
\end{lem}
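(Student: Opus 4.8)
The plan is to prove the two assertions of the Lemma separately, both relying on the fact that $|\muhat_H(h\xi)|$ is close to $|\muhat_H(\xi)|$ when $h$ ranges over the ``short'' subset $H_1 \subset H$, which in turn follows from the near-constancy of the additive character on a short geometric progression multiplied by a fixed element of $\Lambda_\delta$.

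\smallskip

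For the second, ``moreover'' claim: fix $\xi \in \Lambda_\delta$, so $|\muhat_H(\xi)| > p^{-\delta}$, and fix $h = g^s \in H_1$, so $0 \le s < |H| p^{-\delta}/4$. Write out
$$
\muhat_H(h\xi) - \muhat_H(\xi) = \frac{1}{|H|} \sum_{0 \le t < |H|} \bigl( \psi(g^{s+t}\xi) - \psi(g^t \xi) \bigr),
$$
using $\muh$-averages over $H = \{g^t : 0 \le t < |H|\}$. Reindexing the first sum by $t' = t+s$ and noting the multiplicative order of $g$ is at least $T = |H|$, the two sums overlap except on two intervals of length $s$; hence the difference is a sum of at most $2s$ terms each of modulus $\le 1$, divided by $|H|$, giving
$$
|\muhat_H(h\xi) - \muhat_H(\xi)| \le \frac{2s}{|H|} < \frac{2 \cdot |H| p^{-\delta}/4}{|H|} = \frac{p^{-\delta}}{2} < \frac{|\muhat_H(\xi)|}{2}.
$$
The triangle inequality then yields $|\muhat_H(h\xi)| > |\muhat_H(\xi)| - |\muhat_H(\xi)|/2 = |\muhat_H(\xi)|/2$, as desired.

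\smallskip

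For the first claim, $|\Lambda_\delta| \ge |H_1|$: apply the ``moreover'' part with $\xi = \xi_0$, which satisfies $|\muhat_H(\xi_0)| > 2p^{-\delta}$. For every $h \in H_1$ we get $|\muhat_H(h\xi_0)| > |\muhat_H(\xi_0)|/2 > p^{-\delta}$, so $h\xi_0 \in \Lambda_\delta$. The map $h \mapsto h\xi_0$ is injective on $\fp^\times$, and $|H_1|$ distinct elements $h \in H_1 \subset \fp^\times$ are produced since the multiplicative order of $g$ is at least $|H| > |H| p^{-\delta}/4 \ge |H_1|$; thus $\{h\xi_0 : h \in H_1\}$ is a set of $|H_1|$ distinct elements of $\Lambda_\delta$, giving $|\Lambda_\delta| \ge |H_1|$.

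\smallskip

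The main obstacle is purely bookkeeping: making sure the reindexing argument for the difference $\muhat_H(h\xi) - \muhat_H(\xi)$ correctly accounts for the overlap of the two intervals of exponents and that the order-at-least-$T$ hypothesis guarantees $g^t$ for $0 \le t < T$ are distinct, so that no unexpected cancellation or repetition occurs. One must also track the factor of $2$ carefully (hence the $|H| p^{-\delta}/4$ in the definition of $H_1$, not $/2$) so that the final inequality is strict and the conclusion $|\muhat_H(h\xi)| > |\muhat_H(\xi)|/2$ holds with room to spare; this is what later lets the argument iterate and produce large Fourier mass on $\Lambda_\delta \cdot H_1$.
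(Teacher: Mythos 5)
Your proof is correct and uses essentially the same key estimate as the paper: the shift $\muhat_H(g^s\xi)$ differs from $\muhat_H(\xi)$ by at most $2s/|H|$ because $g^sH$ and $H$ overlap except for two blocks of $s$ exponents, and this smoothness along short geometric progressions yields both claims. The only cosmetic difference is that you prove the ``moreover'' assertion first and then specialize to $\xi_0$, whereas the paper derives both directly from the one inequality $|\muhat_H(g^l\xi)|>|\muhat_H(\xi)|-p^{-\delta}/2$.
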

\begin{proof}
For $l \in \Z$ such that $0 \leq l < T$, we have
$$
\muhat_H(g^l \xi) =   
\sum_{x \in \fp} \psi( g^l \xi x) \muh(x) 
=
\sum_{x \in \fp} \psi( \xi x) \muh(g^{-l}x) 
=
\frac{1}{|H|} 
\sum_{x \in g^l H} 
\psi( \xi x) 
$$
$$
=
\frac{1}{|H|} 
\left(
\sum_{x \in  H} 
\psi( \xi x) 
+ 2 \theta l
\right)
$$ 
for some $\theta$ such that $|\theta|\leq 1$. 
Thus, if  $l < |H|p^{-\delta}/4$, then 
\begin{equation}
  \label{eq:muhat-translate-is-big}
|\muhat_H( g^l\xi)| > |\muhat_H( \xi)| -  p^{-\delta}/2. 
\end{equation}
In particular, if $h \in H_1$, then  $|\muhat_H( h \xi_0)| \geq
|\muhat_H(\xi_0)| - 
p^{-\delta}/2 >  2 p^{-\delta} - p^{-\delta}/2 > 
p^{-\delta}$ and hence $|\Lambda_\delta| \geq |H_1|$.
Finally, if $\xi \in \Lambda_\delta$ then
$|\muhat_H(\xi)|>p^{-\delta}$, so the second assertion follows from
\eqref{eq:muhat-translate-is-big}. 
\end{proof}

\begin{lem}
\label{l:substitute-one}
If $\xi \in \Lambda_\delta$, then 
$$
\nuhat_k(\xi)^{4k}
\leq 
2^{8k^2+6k} p^{2k \delta}
\sum_{x \in \fp}
\nuhat_k(h \xi)^2  
\nuk(x) 
$$  
\end{lem}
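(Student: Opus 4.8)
The plan is to mimic the argument of Lemma~\ref{l:smear-out-lemma}, but replacing the exact $H$-invariance of $\nuhat_k$ (which was used there via $\nuhat_k(h\xi)=\nuhat_k(\xi)$) by the \emph{approximate} $H_1$-invariance furnished by the previous lemma, namely $|\muhat_H(h\xi)|>|\muhat_H(\xi)|/2$ for $h\in H_1$, at the cost of the factors $2^{O(k^2)}$ and $p^{O(k\delta)}$. First I would average over $H_1$ instead of over all of $H$: using $\nuhat_k(\xi)=|\muhat_H(\xi)|^{2k}$ together with $|\muhat_H(h\xi)|>|\muhat_H(\xi)|/2$, one gets $\nuhat_k(h\xi)>\nuhat_k(\xi)/2^{2k}$ for every $h\in H_1$, hence
$$
\nuhat_k(\xi)^2 \leq 2^{4k}\,\frac{1}{|H_1|}\sum_{h\in H_1}\nuhat_k(h\xi)^2
= 2^{4k}\sum_{x\in\fp}\nuhat_k(x\xi)^2\,\mu_{H_1}(x),
$$
where $\mu_{H_1}$ is the uniform probability measure on $H_1$. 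The point of passing to $H_1$ is precisely that $|H_1| = |H|p^{-\delta}/4$, so $\mu_{H_1}$ has sup-norm $4p^{\delta}/|H|$, only a factor $4p^{\delta}$ larger than that of $\mu_H$; this is where the $p^{2k\delta}$ will ultimately come from.

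Next I would run the Hölder / Parseval manipulations exactly as in Lemma~\ref{l:smear-out-lemma}. Rewriting $\sum_x\nuhat_k(x\xi)^2\mu_{H_1}(x)=\sum_x\nu_{2k}(-x\xi^{-1})\widehat{\mu_{H_1}}(x)$ by Parseval, then applying the inequality $\sum_x\mu(x)f(x)\leq(\sum_x\mu(x)|f(x)|^l)^{1/l}$ with $\mu=\nu_{2k}$ (a genuine probability measure) and $l=2k$, raising both sides to the power $2k$, gives
$$
\nuhat_k(\xi)^{4k}\leq 2^{8k^2}\sum_{x\in\fp}\nu_{2k}(-x\xi^{-1})\,|\widehat{\mu_{H_1}}(x)|^{2k}.
$$
Now $|\widehat{\mu_{H_1}}(x)|^{2k}=\widehat{(\mu_{H_1}\ast\mu_{H_1}^-)}(x)^k$; writing $\nu^{(1)}:=\mu_{H_1}\ast\mu_{H_1}^-$ and $\nu^{(1)}_k$ for its $k$-fold convolution, this last factor is $\nuhat^{(1)}_k(x)$, and one more application of Parseval converts the whole sum to $\sum_x\nuhat_k(x\xi)^2\,\nu^{(1)}_k(x)$. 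So far this produces a bound involving $\nu^{(1)}_k$, the convolution powers attached to $H_1$, rather than $\nu_k$ itself; the remaining task is to trade $\nu^{(1)}_k$ back for $\nu_k$ at the stated cost.

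The cleanest way to do that last trade, and the step I expect to be the main obstacle, is a pointwise comparison $\nu^{(1)}_k(x)\leq C\,\nu_k(x)$ with $C=(4p^{\delta})^{?}$: since $\mu_{H_1}(x)\leq 4p^{\delta}\,\mu_H(x)$ pointwise, one gets $\nu^{(1)}(x)=(\mu_{H_1}\ast\mu_{H_1}^-)(x)\leq (4p^{\delta})^2(\mu_H\ast\mu_H^-)(x)=(4p^{\delta})^2\nu(x)$, but iterating this through $k$ convolutions would cost $(4p^{\delta})^{2k}$, which overshoots the claimed $2^{6k}p^{2k\delta}$. The fix is to be more economical: only one factor of $\mu_{H_1}$ needs to be dominated by $\mu_H$ at each convolution, not both, because $\mu_{H_1}$ is itself a probability measure and the other slot can simply be bounded by summing it out; more precisely one shows by induction that $\nu^{(1)}_k=\mu_{H_1}\ast\mu_{H_1}^-\ast\cdots$ is dominated pointwise by $(4p^{\delta})\cdot(\text{something already }\leq\nu_k)$ after carefully re-associating the $2k$ copies of $\mu_{H_1}$ and replacing exactly one copy per ``step'' by $4p^\delta\mu_H$, giving the factor $(4p^\delta)^{2k}=2^{4k}p^{2k\delta}$ — still too big by $2^{4k}$ versus the target's $2^{6k}$, which is fine since the inequality only needs to hold with the \emph{stated} (larger) constant $2^{8k^2+6k}p^{2k\delta}$ and $8k^2+6k \geq 8k^2+4k$. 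Absorbing the $2^{4k}$ from Hölder and the remaining $2^{O(k)}$ bookkeeping into $2^{6k}$, and the $2^{8k^2}$ from the Hölder exponent into $2^{8k^2}$, one lands on exactly
$$
\nuhat_k(\xi)^{4k}\leq 2^{8k^2+6k}\,p^{2k\delta}\sum_{x\in\fp}\nuhat_k(x\xi)^2\,\nuk(x),
$$
which is the assertion (with $h$ in the displayed statement evidently a typo for $x$).
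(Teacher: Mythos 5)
Your approach is essentially sound and lands on the stated bound, but it takes a genuinely different (and slightly more laborious) route from the paper, and your middle paragraph contains a confused detour that you should be aware of.

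What the paper does: after the averaging step $\nuhat_k(\xi)^2\leq \tfrac{2^{4k}}{|H_1|}\sum_{h\in H_1}\nuhat_k(h\xi)^2$, it immediately extends the average from $H_1$ to all of $H$, using nothing more than nonnegativity of the summands:
$$
\frac{1}{|H_1|}\sum_{h\in H_1}\nuhat_k(h\xi)^2\ \leq\ \frac{1}{|H_1|}\sum_{h\in H}\nuhat_k(h\xi)^2\ =\ \frac{|H|}{|H_1|}\sum_{x\in\fp}\nuhat_k(x\xi)^2\,\muh(x)\ \leq\ 8p^{\delta}\sum_{x\in\fp}\nuhat_k(x\xi)^2\,\muh(x).
$$
It then raises to the power $2k$ and invokes the argument of Lemma~\ref{l:smear-out-lemma} \emph{verbatim} with $\muh$, so all the $p^{\delta}$-cost is paid once, up front, before any Fourier manipulation. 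Your route instead pushes $\mu_{H_1}$ through the whole Parseval/H\"older machinery, arriving at the auxiliary convolution powers $\nu^{(1)}_k$ of $\mu_{H_1}$, and trades those for $\nu_k$ at the very end by pointwise domination. That trade is legitimate and does give the right constant, but it requires introducing a new family of measures and proving (or at least citing) the fact that pointwise domination of nonnegative functions is preserved under convolution; the paper's version avoids all of that by reusing the earlier lemma unchanged.

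The confused part of your write-up is the ``fix'' paragraph. You claim the naive iterated domination $\nu^{(1)}_k\leq(4p^{\delta})^{2k}\nu_k=2^{4k}p^{2k\delta}\nu_k$ ``overshoots the claimed $2^{6k}p^{2k\delta}$'' --- but $2^{4k}p^{2k\delta}\leq 2^{6k}p^{2k\delta}$, so it \emph{undershoots}: the naive bound is already more than strong enough, and there is nothing to fix. (The ``replace one copy per step'' device you sketch isn't needed, and as phrased it isn't really a valid argument either.) Note also that the paper works with $|H|/|H_1|\leq 8p^{\delta}$ rather than $4p^{\delta}$, which is what produces $(8p^{\delta})^{2k}=2^{6k}p^{2k\delta}$ exactly; with your tighter $4p^{\delta}$ you would get $2^{8k^2+4k}p^{2k\delta}$, which is again $\leq 2^{8k^2+6k}p^{2k\delta}$ and hence fine. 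Finally, you are right that the $h$ appearing inside $\nuhat_k(h\xi)^2$ in the displayed statement (and in the paper's own proof) is a typo for $x$.
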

\begin{proof}
If $\xi \in \Lambda_\delta$, then $|\muhat_H(\xi h)| \geq |\muhat_H(\xi
)|/2$ for all $h \in H_1$.  Hence
$$
\nuhat_k(\xi)^2 \leq
\frac{2^{4k}}{|H_1|}
\sum_{h  \in H_1} \nuhat_k(h \xi)^2  
\leq
\frac{2^{4k}|H|}{|H_1|}
\sum_{x \in \fp}
\nuhat_k(h \xi)^2  
\muh(x)$$
$$
=
2^{4k+3}p^{\delta}
\sum_{x \in \fp}
\nuhat_k(h \xi)^2  
\muh(x)
$$
since $|H|/|H_1| \leq 8p^{\delta}$.  
Thus, if $\xi \in \Lambda_\delta$, then
$$
\nuhat_k(\xi)^{4k} \leq
2^{8k^2+6k} p^{2k \delta}
\left( \sum_{x \in \fp}
\nuhat_k(h \xi)^2  
\muh(x) \right)^{2k}
\leq 
2^{8k^2+6k} p^{2k \delta}
\sum_{x \in \fp}
\nuhat_k(h \xi)^2  
\nuk(x) 
$$
by the same argument used in the proof of
Lemma~\ref{l:smear-out-lemma}. 
\end{proof}
\begin{prop}
For $p$ sufficiently large, 
$$
p^{-11\eta}
\sum_{\xi \in \fp}
\nuhat_k(\xi)^2
\leq
\sum_{\xi,x \in \fp}
\nuhat_k(\xi)^2
\nuhat_k(\xi x)^2
\nu_k(x)
$$
\end{prop}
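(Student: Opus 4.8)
The plan is to mimic the proof of Proposition~\ref{p:smearing-out} verbatim, but with Lemma~\ref{l:substitute-one} playing the role that Lemma~\ref{l:smear-out-lemma} played there, and keeping track of the extra powers of $2$ and $p^\delta$ that the incomplete-sum setting introduces. First I would invoke Proposition~\ref{p:finding-Lambda-sub-delta} (which applies to any measure on $\fp$, in particular $\mu=\muh$) to get $k$ and $\delta\in(0,\eta/k^2)$ with $p^{-2\eta}\sum_{\xi\in\fp}\nuhat_k(\xi)^2\leq\sum_{\xi\in\Lambda_\delta}\nuhat_k(\xi)^2$. Then, exactly as before, on $\Lambda_\delta$ one has $\nuhat_k(\xi)\geq p^{-4k^2\delta}$ is \emph{not} needed; rather one writes $\nuhat_k(\xi)^2\leq p^{8k^2\delta}\nuhat_k(\xi)^{4k+2}$ using $\nuhat_k(\xi)=|\muhat_H(\xi)|^{2k}>p^{-2k\delta}$ on $\Lambda_\delta$ and $4k^2\delta<\eta$. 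So
$$
p^{-2\eta}\sum_{\xi\in\fp}\nuhat_k(\xi)^2
\leq\sum_{\xi\in\Lambda_\delta}\nuhat_k(\xi)^2
\leq p^{8\eta}\sum_{\xi\in\Lambda_\delta}\nuhat_k(\xi)^{4k+2}.
$$

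Next I would apply Lemma~\ref{l:substitute-one} termwise to each $\xi\in\Lambda_\delta$, bounding $\nuhat_k(\xi)^{4k}$ by $2^{8k^2+6k}p^{2k\delta}\sum_{x\in\fp}\nuhat_k(h\xi)^2\nuk(x)$ — here I assume, as the lemma evidently intends, that the $h$ appearing in its statement should read as the summation variable $x$ (i.e. the right-hand side is $\sum_{x\in\fp}\nuhat_k(x\xi)^2\nuk(x)$; this is how the lemma's proof uses Parseval and the substitution from Lemma~\ref{l:smear-out-lemma}). Multiplying the surviving factor $\nuhat_k(\xi)^2$ back in and extending the $\xi$-sum from $\Lambda_\delta$ to all of $\fp$ (legitimate since every summand is nonnegative), this gives
$$
p^{-2\eta}\sum_{\xi\in\fp}\nuhat_k(\xi)^2
\leq 2^{8k^2+6k}p^{8\eta+2k\delta}\sum_{\xi,x\in\fp}\nuhat_k(\xi)^2\nuhat_k(\xi x)^2\nu_k(x).
$$

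It then remains to absorb the constant $2^{8k^2+6k}p^{2k\delta}$ into a single factor $p^{\eta}$, so that the total prefactor becomes $p^{-2\eta-8\eta-\eta}=p^{-11\eta}$ as claimed. Since $\delta<\eta/k^2$ we have $2k\delta<2\eta/k\leq\eta/2$ (as $k\geq4$), which is comfortable. The genuinely nontrivial point — and the one I expect to be the main obstacle — is controlling $2^{8k^2+6k}$: $k$ is only bounded by $\exp(\exp(O(1/\eta)))$, so $2^{8k^2}$ is enormous, yet it must be dominated by $p^{\eta/2}$, say. This forces the restriction ``for $p$ sufficiently large'' in the statement: one argues that $\eta$ (hence $k=k(\eta)$, chosen independently of $p$) is fixed first, after which $2^{8k^2+6k}<p^{\eta/2}$ holds once $p$ exceeds an explicit threshold $p_0(\eta)$. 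Assembling $p^{-2\eta}\cdot p^{-8\eta}\cdot p^{-\eta/2}\cdot p^{-\eta/2}\geq p^{-11\eta}$ on the left then finishes the proof. I would close by remarking that this proposition is the incomplete-sum analogue of Proposition~\ref{p:smearing-out}, feeding into Proposition~\ref{p:proposition2.1} with $\mu=\nu_k$ and $\Delta=p^{-11\eta}$ in place of $p^{-10\eta}$.
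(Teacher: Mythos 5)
Your proposal matches the paper's proof step for step: the same chain $p^{-2\eta}\sum_{\xi}\nuhat_k(\xi)^2 \leq \sum_{\Lambda_\delta}\nuhat_k(\xi)^2 \leq p^{8\eta}\sum_{\Lambda_\delta}\nuhat_k(\xi)^{4k+2}$, then Lemma~\ref{l:substitute-one} termwise, then absorbing $2^{8k^2+6k}p^{2k\delta}$ into $p^{\eta}$ (valid for $p$ large since $k$ depends only on $\eta$, not $p$, and $2k\delta<\eta/2$), and finally relaxing the $\xi$-sum to all of $\fp$. You also correctly read the $h$ in the lemma's right-hand side as the summation variable $x$, which is indeed what the lemma's proof delivers.
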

\begin{proof}
Arguing as in the proof of Proposition~\ref{p:smearing-out} find that 
$$
p^{-2\eta}
\sum_{\xi \in \fp}
\nuhat_k(\xi)^2
\leq 
\sum_{\xi \in \Lambda_\delta}
\nuhat_k(\xi)^2
\leq
p^{8 k^2 \delta}
\sum_{\xi \in \Lambda_\delta}
\nuhat_k(\xi)^{4k+2}
\leq 
p^{8\eta}
\sum_{\xi \in \Lambda_\delta}
\nuhat_k(\xi)^{4k+2}
$$  
which, by Lemma~\ref{l:substitute-one} is 
$$
\leq
p^{8 \eta + 2k\delta}
2^{8k^2+6k}
\sum_{\xi \in \Lambda_\delta}
\sum_{x \in \fp}
\nuhat_k(\xi)^{2}
\nuhat_k(\xi x)^{2}
\nuk(x)
\leq
p^{9\eta}
\sum_{x, \xi \in \fp}
\nuhat_k(\xi)^{2}
\nuhat_k(\xi x)^{2}
\nuk(x)
$$
\end{proof}
The rest of the proof is now essentially the same as the proof of
Theorem~\ref{t:main}.


\providecommand{\bysame}{\leavevmode\hbox to3em{\hrulefill}\thinspace}
\providecommand{\MR}{\relax\ifhmode\unskip\space\fi MR }
\providecommand{\MRhref}[2]{%
  \href{http://www.ams.org/mathscinet-getitem?mr=#1}{#2}
}
\providecommand{\href}[2]{#2}

\end{document}